\numberwithin{equation}{section}
\newcommand{\al}{\alpha}
\newcommand{\be}{\beta}
\newcommand{\de}{\delta}
\newcommand{\De}{\Delta}
\newcommand{\ga}{\gamma}
\newcommand{\Ga}{\Gamma}
\newcommand{\la}{\lambda}
\newcommand{\La}{\Lambda}
\newcommand{\ot}{\otimes}
\newcommand{\Om}{\Omega}
\newcommand{\te}{\theta}
\newcommand{\Te}{\Theta}
\newcommand{\vp}{\varphi}
\newcommand{\vt}{\vartheta}
\newcommand{\mB}{\mathcal{B}}
\newcommand{\mP}{\mathcal{P}}
\newcommand{\mc}{\mathscr{C}}
\newcommand{\pp}{\mathscr{P}}
\newcommand{\fs}{\mathfrak{S}}
\newcommand{\bN}{\mathbb{N}}
\newcommand{\bZ}{\mathbb{Z}}
\newcommand{\sh}{~\raisebox{0.55em}{\rotatebox{-90}{$\exists$}}~}
\newcommand{\mb}[1]{\mbox{#1}}
\newcommand{\ms}[1]{\mbox{\sffamily #1}}
\newcommand{\bs}[1]{{\scriptsize\mbox{#1}}}
\newcommand{\s}[1]{{\scriptsize #1}}
\newcommand{\ti}[1]{{\mbox{\tiny #1}}}
\newcommand{\stt}[1]{{\scriptstyle #1}}
\newcommand{\ol}[1]{\overline{#1}}
\newcommand{\us}[1]{\mbox{\upshape #1}}
\newcommand{\lan}{\left\langle}
\newcommand{\ran}{\right\rangle}
\newcommand{\lb}{\left(}
\newcommand{\rb}{\right)}
\newcommand{\rw}{\rightarrow}
\newcommand{\beq}{\begin{equation}}
\newcommand{\eeq}{\end{equation}}
\begin{document}

\newtheorem{theorem}{Theorem}[section]

\newtheorem{lem}[theorem]{Lemma}

\newtheorem{cor}[theorem]{Corollary}
\newtheorem{prop}[theorem]{Proposition}

\theoremstyle{remark}
\newtheorem{rem}[theorem]{Remark}

\newtheorem{defn}[theorem]{Definition}

\newtheorem{exam}[theorem]{Example}

\theoremstyle{conjecture}
\newtheorem{con}[theorem]{Conjecture}

\renewcommand\arraystretch{1.2}
%%%%%%%%%%%%%%%%%%%%%%%%%%%%%%%%%%%%%%%%%%%%%%%
%%%%%%%%%%%%%%%%%%%%%%%%%%%%%%%%%%%%%%%%%%%%%%%%%

\title[The shifted Poirier-Reutenauer algebra]{The shifted Poirier-Reutenauer algebra}

\author[Jing]{Naihuan Jing}
\address{Department of Mathematics, North Carolina State University, Raleigh, NC 27695-8205, USA}
\email{jing@math.ncsu.edu}

\author[Li]{Yunnan Li$^\ast$}
\address{Department of Mathematics, South China University of Technology, Guangzhou 510640, China}
\email{scynli@scut.edu.cn}

\thanks{$\ast$ corresponding author.}

\subjclass[2010]{Primary 05E05, 16T30; Secondary 05E99, 16T99, 05A99}
\date\today

\begin{abstract}
Based on the shifted Schensted correspondence and the shifted Knuth equivalence,
 a shifted analog of the Poirier-Reutenauer algebra as a higher lift of Schur's P-functions and a right coideal subalgebra of the Poirier-Reutenauer algebra is constructed. Its close relations with
 the peak subalgebra and the Stembridge algebra of peak functions are also uncovered.
%The shifted analog of the Poirier-Reutenauer algebra as a higher lift of Schur's P-functions and a right coideal subalgebra of the Poirier-Reutenauer algebra are constructed. It is based on the shifted Schensted correspondence and the shifted Knuth equivalence. The close relation with
% the peak subalgebra and the Stembridge algebra of peak functions is also uncovered.
 % is shown to be relates the shifted Poirier-Reutenauer algebra

\medskip
\noindent\textit{Keywords: shifted Poirier-Reutenauer algebra, Schur's P-function, peak subalgebra}
\end{abstract}

\maketitle

\section{Introduction}

Gessel \cite{Ge} used Stanley's theory of P-partitions to define the quasisymmetric functions $\ms{QSym}$ as  nonsymmetric generalization of symmetric functions. Stembridge \cite{Ste} extended Gessel's approach to define enriched P-partitions in order to give a combinatorial theory for Schur's Q-functions, which give rise to the peak functions positively refining Schur's Q-functions. On the other hand, the graded Hopf dual of the Stembridge algebra of peak functions is the peak subalgebra $\ms{Peak}$ in Solomon's descent algebra of the symmetric group.

In \cite{MR} Malvenuto and Reutenauer endowed the free module $\bZ\fs$ generated by permutations with two dual graded Hopf algebra structures and proved that $\ms{QSym}$ is a graded Hopf dual to Solomon's descent algebra, which is also isomorphic to the algebra $\ms{NSym}$ of noncommutative symmetric functions studied in \cite{GKL}. Later, Poirier and Reutenauer \cite{PR} defined two dual Hopf algebra structures $\ms{PR}$ and $\ms{PR}'$ from $\bZ\fs$ by passing to plactic classes, which have close relations with the Solomon descent algebra, symmetric and quasisymmetric functions (see diagrams (\ref{dia}),(\ref{dia1})).

The peak algebra $\ms{Peak}$ considered here is the Hopf algebra $\stackrel{\circ}{\mathfrak{P}}$ of interior peaks in \cite{ABN}, where Aguiar et al. defined a graded Hopf subalgebra $I^0$ of the algebra $\mb{\itshape Sol}^{\pm}(B)$ of noncommutative symmetric functions of type B, which is naturally projected onto $\stackrel{\circ}{\mathfrak{P}}$. As a lift of the embedding of the Solomon descent algebra $\mb{\itshape Sol}(A)$ of type A into the Malvenuto-Reutenauer algebra $\bZ\fs$, $\mb{\itshape Sol}^{\pm}(B)$ is embedded into the Hopf algebra $\bZ\mathfrak{B}$ of signed permutations. That is,

\[\raisebox{2em}{\xymatrix@H=1.5em{
I^0\,\,\ar@{>->}[r]\ar@{>>}[d]^-\vp& \mb{\itshape Sol}^{\pm}(B)\,\, \ar@{>->}[r]\ar@{>>}[d]^-\vp& \bZ\mathfrak{B}\ar@{>>}[d]^-\vp\\
\stackrel{\circ}{\mathfrak{P}}\,\,\ar@{>->}[r]& \mb{\itshape Sol}(A) \,\,\ar@{>->}[r]&\bZ\fs
}},\]
where $\vp$ is the map forgetting the signs of signed permutations.

In this paper, we only concentrate on the circumstance of
type A. That is, the shifted analog is constructed
 inside the Malvenuto-Reutenauer algebra $\bZ\fs$ of permutations. Recently Schur P-functions have been successively lifted onto the peak algebra $\ms{Peak}$ in \cite{JL}.
 Our current construction gives a higher lift of Schur P-functions onto the Poirier-Reutenauer algebra $\ms{PR}'$ as shown in diagram (\ref{dia'}). More explicitly, we derive a right coideal subalgebra $\ms{SPR}'$ of $\ms{PR}'$ with a natural basis indexed by marked shifted standard tableaux, which is projected onto that of
 Schur's P-functions. Meanwhile, the multiplication rule of $\ms{SPR}'$ directly implies the shifted Littlewood-Richardson rule. In the dual picture
  the canonical projection from $\ms{PR}$
to $\ms{SPR}$ lifts Stembridge's descent-to-peak map $\vt$, while the descent-to-peak transform $\Te:\ms{NSym}\rw\ms{Peak}$ (first defined in \cite{KLT}) has also been extended to the PR algebra level in the last part. As a result, we obtain a combinatorial expansion formula of the modified Schur functions in Schur's Q-functions (see (\ref{SQ})).

In order to define the shifted analog of the PR algebras, we consider the shifted plactic classes based on the shifted Knuth equivalence due to Sagan \cite{Sag}. It is worthy mentioning that Serrano \cite{Ser} also realized Schur's P-functions inside the shifted plactic algebra based on Haiman's mixed insertion \cite{Hai},
and that has been proved to be dual to the shifted Schensted correspondence given in \cite{Sag}. From this point of view, our approach can be partly thought as a dual version of Serrano's work. Nevertheless, our picture
 delivers more properties in the sense that the shifted Poirier-Reutenauer algebras are proved
 to carry intrinsic relations with both the peak algebra and the Stembridge algebra of peak functions. In order to reveal their relationship, we need to study the combinatorics of peaks for permutations and shifted tableaux in the context of the shifted Schensted correspondence. It is also noted that the analog of PR algebras in the type B case (inside the Hopf algebra of signed permutations) has been found by Baumann and Hohlweg in \cite[\S 5]{BH}, based on the Robinson-Schensted-Okada correspondence. Their work also revealed the relation with the algebra of (quasi)symmetric functions of type B.

The organization of the paper is summarized as follows. In $\S 2$ we provide the notation and definitions for some basic combinatorial objects. In $\S 3$ we recall the shifted Schensted correspondence and prove a key lemma about the descents and peaks of tableaux. In $\S 4$ our main construction is given. First we introduce combinatorial Hopf algebras involved in the construction, then bring in the shifted analogs of Poirier-Reutenauer algebras and describe their structural maps. Finally, we prove that the shifted Poirier-Reutenauer algebra is a lift of the Schur P-function and
discuss its relation with other combinatorial Hopf algebras. In the end, two further questions are also raised.

\section{Background}

\subsection{Notation and definitions}
Denote by $\bN$ (resp. $\bN_0$) the set of positive (resp. nonnegative) integers. Given any $m,n\in\bN,\,m\leq n$, let $[m,n]:=\{m,m+1,\dots,n\}$ and $[n]:=[1,n]$ for short. Let $\mc(n)$ be the set of compositions of $n$, consisting of ordered tuples of positive integers summed up to $n$ and $\mc=\bigcup\limits_{n\geq1}^.\mc(n)$. Write $\al\vDash n$ when $\al\in \mc(n)$. Let $\pp(n)$ be the set of partitions of $n$, consisting of compositions with weakly decreasing parts, and $\pp=\bigcup\limits_{n\geq1}^.\pp(n)$. Write $\al\vdash n$ when $\al\in \pp(n)$. A partition is strict, if all of its parts are distinct.

Given $\al=(\al_1,\dots,\al_r)\vDash n$, let its length $\ell(\al):=r$ and define its associated \textit{descent set}
\[D(\al)=\{\al_1,\al_1+\al_2,\dots,\al_1+\cdots+\al_{r-1}\}
\subseteq[n-1].\]
This gives a bijection between the compositions of $n$
and  the subsets of $[n-1]$. Also, the refining order $\preceq$ on $\mc(n)$ is defined by
\[\al\preceq\be\mb{ if and only if }D(\be)\subseteq D(\al),\,\forall\al,\be\vDash n.\]
For any $D\subseteq[n-1]$, we define its \textit{peak set} by
\[\mb{Peak}(D)=\{i\in D\backslash\{1\}\,:\,i-1\notin D\}.\]
Let $\mb{Peak}(\al):=\mb{Peak}(D(\al))$. The peak sets in $[n-1]$ are those subsets of $[2,n-1]$ without consecutive numbers.

In general, let $\mc_w(n)$ be the set of \textit{weak} compositions of $n$, consisting of ordered tuples of nonnegative integers summed up to be $n$. All the notation for compositions can be used for the weak ones. For $\al\in\mc_w$, let $\ell(\al)=|\{i\,:\,\al_i>0\}|$.

Given a strict $\la=(\la_1,\dots,\la_r)\vdash n$, the \textit{shifted shape} of $\la$ is an array of boxes in which
the $i$-th row has $\la_i$ boxes, and is shifted $i-1$ units to the right with respect to the top row. A \textit{(marked) shifted tableau} $T$ of shape $\la$ is a filling of the shifted shape $\la$
with letters from the alphabet $X'=\{1'<1<2'<2<\cdots\}$ such that:

$\bullet$ rows and columns of $T$ are weakly increasing;

$\bullet$ each $k$ appears at most once in every column;

$\bullet$ each $k'$ appears at most once in every row;

$\bullet$ there are no primed entries on the main diagonal.

A shifted tableau of shape $\la$ is called \textit{standard} if it contains each of the entries $1,\dots,|\la|$ exactly once, and
\textit{marked-standard} if these entries are allowed to be primed.
We denote by $\mb{ShSSYT}^\pm(\la)$ the set of marked shifted tableaux of shape $\la$, $\mb{ShSYT}(\la)$ and $\mb{ShSYT}^\pm(\la)$ for the standard and the marked-standard one respectively. Given a marked shifted tableau $T$, let $|T|$ be the unmarked tableau obtained by removing all the primes of entries of $T$. The \textit{weight} of a (marked) shifted tableau $T$ is a weak composition $\mb{wt}(T):=(\al_1,\al_2,\dots)$,
where $\al_i$ is the multiplicity of $i$ and $i'$ in $T$ for all $i\geq1$. The \textit{reading word} of a (marked) shifted tableau $T$, denoted by $w(T)$, is the word obtained by reading the letters of $T$ row by row
upward from the bottom to the top
and from the left to the right along each row. The \textit{standardization} $\mb{st}(T)\in\mb{ShSYT}^\pm(\la)$ of a marked shifted tableau $T$ of shape $\la$ is defined in terms of the total order of $X'$. It just refines the order of entries of $T$  by reading those $i$'s in $T$ from left to right and those $i'$'s in $T$ from top to bottom.

For example, let
$T=\raisebox{1.6em}{\xy 0;/r.16pc/:
(0,0)*{};(20,0)*{}**\dir{-};
(0,-5)*{};(20,-5)*{}**\dir{-};
(5,-10)*{};(20,-10)*{}**\dir{-};
(10,-15)*{};(15,-15)*{}**\dir{-};
(0,0)*{};(0,-5)*{}**\dir{-};
(5,0)*{};(5,-10)*{}**\dir{-};
(10,0)*{};(10,-15)*{}**\dir{-};
(15,0)*{};(15,-15)*{}**\dir{-};
(20,0)*{};(20,-10)*{}**\dir{-};
(2.5,-2.5)*{\stt{1}};(7.5,-2.5)*{\stt{3'}};(12.5,-2.5)*{\stt{4'}};(17.5,-2.5)*{\stt{4}};
(7.5,-7.5)*{\stt{3'}};(12.5,-7.5)*{\stt{4}};
(17.5,-7.5)*{\stt{6}};(12.5,-12.5)*{\stt{6}};
\endxy}~$. The reading word $w(T)=63'4613'4'4$, the weight $\mb{wt}(T)=(1,0,2,3,0,2)$ and
the standardization
$\mb{st}(T)=\raisebox{1.6em}{\xy 0;/r.16pc/:
(0,0)*{};(20,0)*{}**\dir{-};
(0,-5)*{};(20,-5)*{}**\dir{-};
(5,-10)*{};(20,-10)*{}**\dir{-};
(10,-15)*{};(15,-15)*{}**\dir{-};
(0,0)*{};(0,-5)*{}**\dir{-};
(5,0)*{};(5,-10)*{}**\dir{-};
(10,0)*{};(10,-15)*{}**\dir{-};
(15,0)*{};(15,-15)*{}**\dir{-};
(20,0)*{};(20,-10)*{}**\dir{-};
(2.5,-2.5)*{\stt{1}};(7.5,-2.5)*{\stt{2'}};(12.5,-2.5)*{\stt{4'}};(17.5,-2.5)*{\stt{6}};
(7.5,-7.5)*{\stt{3'}};(12.5,-7.5)*{\stt{5}};
(17.5,-7.5)*{\stt{8}};(12.5,-12.5)*{\stt{7}};
\endxy}~$.

\section{The shifted Schensted correspondence}
In \cite[Th. 3.1]{Sag}, Sagan provided the \textit{shifted Schensted correspondence} between permutations $w\in\fs_n$ and pairs $(P, Q)$ of standard shifted tableaux of size $n$ with $Q$ marked. He then generalized it to the shifted RSK correspondence. Meanwhile, Worley independently obtained such a bijection in his Ph.D. thesis, which is
now called the \textit{Sagan-Worley insertion}. Our construction of shifted PR algebras
will be based on Sagan-Worley correspondence.

Now let us recall the shifted Schensted correspondence. Suppose $w=w_1\dots w_n\in\fs_n$. We recursively construct a sequence $(P_0,Q_0),\dots,(P_n,Q_n)=(P,Q)$ of tableaux, where
each $P_i$ ia a standard shifted tableau and $Q_i$ is a marked-standard shifted tableau of the same shape as $P_i$, as follows.
Set $(P_0,Q_0)=(\emptyset,\emptyset)$. For $i=1,\dots,n$, insert $w_i$ into $P_{i-1}$ in the following manner:

\smallskip
Insert $w_i$ into the first row, bumping out the smallest element $x$ that is strictly greater than $w_i$ (like the usual Schensted row insertion). This is continued according to two rules:

\smallskip
(1) if $x$ is not on the main diagonal, then it is inserted to the next row as explained above;

\smallskip
(2) if $x$ is on the main diagonal, then it sets out a chain of column insertions
as follows. First it is inserted to the
next column on the right by bumping out the smallest element $y$ that is strictly greater than
$x$. Continue such column insertions until a letter is placed at the end of a column, without bumping out
a new element (i.e. no numbers in the column are strictly greater than the inserted one).

\smallskip
Procedure (2) is called a \textit{non-Schensted move}. When the above insertion process terminates, the resulting tableau is $P_i$.
The shapes of $P_{i-1}$ and $P_i$ differ by one box. To obtain $Q_i$, add that box to $Q_{i-1}$, and mark it with $i$
if the non-Schensted move is not involved in the process, otherwise write $i'$ on the box. Finally we call $P$ the insertion tableau and $Q$ the recording tableau, and denote them by $P_{\bs{SW}}(w)$ and $Q_{\bs{SW}}(w)$ respectively.

Recall that the shifted Knuth transformations for permutations in $\fs_n$ are (see \cite{Sag})

(SK1)\quad$xzy\sim zxy$ if $x<y<z$,

(SK2)\quad$yxz\sim yzx$ if $x<y<z$,

(SK3)\quad$xy\sim yx$ if $x,y$ are the first two letters of the permutation,

\noindent
which define the \textit{shifted Knuth equivalence} in $\fs_n$, denoted by $\equiv_{\bs{SK}}$. In \cite[Theorem 7.2]{Sag}, Sagan used the shifted Knuth equivalence to characterize permutations with the same insertion tableau $P$. Namely, for $u,v\in\fs_n$, $P_{\bs{SW}}(u)=P_{\bs{SW}}(v)$
if and only if $u\equiv_{\bs{SK}}v$.

The \textit{descent set} of a (marked-)standard shifted tableau $T$ of size $n$ is defined by
\[
\mb{Des}(T):=\left\{i\in[n-1]\,:\,
{i\mb{ strictly upper than }i+1\mb{ or } (i+1)'\mb{ in }T%(primed or unprimed)}
\atop\mb{or }(i+1)'\mb{ weakly upper than }i \mb{ or } i' \mb{ in }T}\right\}.
%\mb{ (primed or unprimed)}}\right\}.
\]
The associated composition of $\mb{Des}(T)$ is denoted by $c(T)$, and the \textit{peak set} of $T$ is defined by $\mb{Peak}(T):=\mb{Peak}(\mb{Des}(T))$. The \textit{descent set} of $w=w_1\dots w_n\in\fs_n$ is defined by
\[\mb{Des}(w):=\{i\in[n-1]\,:\,w_i>w_{i+1}\}.\]
The associated composition of $\mb{Des}(w)$ is denoted by $c(w)$, and the \textit{peak set} of $w$ is defined by $\mb{Peak}(w):=\mb{Peak}(\mb{Des}(w))$.

Next we give the following key lemma, which generalizes a result due to Sch\"{u}tzen\-berger \cite[\S 4, Ex. 17]{Ful}.

\begin{lem}\label{des}
For any permutation $w\in\fs_n$, we have

(1)\quad$\mb{Des}(Q_{\bs{SW}}(w))=\mb{Des}(w)$,

(2)\quad$\mb{Peak}(P_{\bs{SW}}(w^{-1}))=\mb{Peak}(w)$.
\end{lem}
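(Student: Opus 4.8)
The plan is to prove (1) by a direct analysis of two consecutive insertion steps in the Sagan--Worley algorithm, and then to derive (2) from (1) together with the inversion symmetry of the shifted correspondence. For (1) I would argue that for each $i\in[n-1]$ the two boxes adjoined to the recording tableau at steps $i$ and $i+1$, together with the mark placed at step $i+1$, detect precisely whether $w_i>w_{i+1}$. Writing $b_i$ for the box carrying the value $i$ in $Q_{\bs{SW}}(w)$ and $r_i$ for its row, the definition of $\mb{Des}$ shows that $i\in\mb{Des}(Q_{\bs{SW}}(w))$ exactly when $r_i<r_{i+1}$, or when $b_{i+1}$ is primed and $r_{i+1}\le r_i$. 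Thus it suffices to prove the insertion dichotomy: if $w_i<w_{i+1}$ then the bumping path of $w_{i+1}$ stays weakly to the right of that of $w_i$, so $b_{i+1}$ lands weakly above $b_i$ and no non-Schensted move is triggered at step $i+1$ (hence $b_{i+1}$ is unprimed); whereas if $w_i>w_{i+1}$ then either $b_{i+1}$ lands strictly below $b_i$, or the insertion of $w_{i+1}$ reaches the diagonal and sets off a chain of column insertions, marking $b_{i+1}$ and placing it weakly above $b_i$. This is the shifted refinement of Sch\"utzenberger's two-row lemma; the genuinely new point is the diagonal case, where I would track the column-insertion chain and show that the mark is placed exactly in the situation the definition of $\mb{Des}$ records as a descent.

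For (2), the key structural input is the inversion symmetry of the Sagan--Worley correspondence, namely $P_{\bs{SW}}(w^{-1})=|Q_{\bs{SW}}(w)|$ (the shifted analog of $P(w^{-1})=Q(w)$ in ordinary RSK, which for permutations follows from the symmetry theorem in \cite{Sag}). Granting this, $\mb{Peak}(P_{\bs{SW}}(w^{-1}))=\mb{Peak}(|Q_{\bs{SW}}(w)|)$, so by (1) it remains to check that erasing the marks does not change the peak set, i.e. $\mb{Peak}(|Q|)=\mb{Peak}(Q)$ for $Q=Q_{\bs{SW}}(w)$. Comparing the two descent conditions one sees $\mb{Des}(Q)=\mb{Des}(|Q|)\sqcup S$, where $S=\{i:b_{i+1}\text{ is primed and }r_{i+1}\le r_i\}$ consists exactly of the primed descents. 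Since peaks depend only on the descent set, the statement reduces to two facts about $S$: (F1) no $i\in S$ is a peak of $\mb{Des}(Q)$, i.e. $i=1$ or $i-1\in\mb{Des}(Q)$; and (F2) deleting $i\in S$ creates no new peak at $i+1$, i.e. $i+1\notin\mb{Des}(|Q|)$. Both are claims about the local descent pattern surrounding a non-Schensted move, and I would establish them by revisiting the column-insertion chain that produces the mark at step $i+1$ and locating the boxes $b_{i-1},b_i,b_{i+1},b_{i+2}$; since altering membership of $i$ can only affect the peak status at $i$ and at $i+1$, (F1) and (F2) together yield $\mb{Peak}(|Q|)=\mb{Peak}(Q)$.

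The main obstacle will be precisely this control of the primes in part (2): proving (F1) and (F2) requires showing that a non-Schensted move at step $i+1$ forces the preceding position $i-1$ to be a descent of $Q$ and forbids $i+1$ from being an unmarked descent. I expect the cleanest route is to characterize, in terms of the insertion of $w_i,w_{i+1}$ and the diagonal, exactly when the mark appears, and to read off the surrounding rows from the monotonicity of the bumping and column-insertion paths; the bookkeeping between the row-insertion and non-Schensted regimes, and the case in which elements of $S$ are adjacent, is where the real care is needed. A secondary point worth verifying carefully is the inversion symmetry itself in the marked setting: because the recording tableau is primed while the insertion tableau is not, the identity $P_{\bs{SW}}(w^{-1})=|Q_{\bs{SW}}(w)|$ must be stated at the level of unmarked shapes and fillings before it can be fed into the peak computation.
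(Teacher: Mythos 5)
Both halves of your plan run into concrete trouble. For (1), your positional criterion for descents of $Q_{\bs{SW}}(w)$ misreads the definition: the first disjunct requires the entry $i$ to be \emph{unprimed}. For $w=4123$ one finds $Q_{\bs{SW}}(w)$ has first row $1,2',4$ and second row $3$, so $r_2=1<r_3=2$ and your criterion declares $2$ a descent, although $w_2=1<w_3=2$ (and indeed $2\notin\mb{Des}(Q_{\bs{SW}}(w))$ under the paper's definition, because $2$ is primed and $3$ is not). Worse, the claimed ``insertion dichotomy'' is false: $w_i<w_{i+1}$ neither prevents a non-Schensted move at step $i+1$ nor forces $b_{i+1}$ to land weakly above $b_i$. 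Take $w=436512$: inserting $w_5=1$ is a non-Schensted move ending at box $(1,4)$, and inserting $w_6=2$ bumps $3$ out of $(1,2)$ into row $2$, where it hits the diagonal entry $6$ and sets off a non-Schensted move ending at $(2,3)$; thus $b_6$ is primed and lies strictly \emph{below} $b_5$ even though $w_5<w_6$. The equality $\mb{Des}(Q_{\bs{SW}}(w))=\mb{Des}(w)$ survives only because the primes invert the descent convention, and this is exactly the case analysis on the prime status of $b_i$ and $b_{i+1}$ that the paper's proof carries out and your dichotomy skips.

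For (2) the gap is fatal: the ``inversion symmetry'' $P_{\bs{SW}}(w^{-1})=|Q_{\bs{SW}}(w)|$ on which your whole argument rests is not a theorem and is false. Haiman's duality (Remark \ref{rem}) states $Q_{\bs{mix}}(w)=P_{\bs{SW}}(w^{-1})$ and $P_{\bs{mix}}(w)=Q_{\bs{SW}}(w^{-1})$, where mixed insertion is a \emph{different} algorithm; no unmarking identity holds. On the paper's own example $w=612543$, the tableau $|Q_{\bs{SW}}(w)|$ has rows $1,2,4,6$ and $3,5$, while $P_{\bs{SW}}(w^{-1})$ has rows $1,2,3,4$ and $5,6$. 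Your auxiliary claim $\mb{Peak}(|Q|)=\mb{Peak}(Q)$ fails on the same example: $\mb{Des}(Q_{\bs{SW}}(w))=\{1,4,5\}$ has peak set $\{4\}$, but $\mb{Des}(|Q_{\bs{SW}}(w)|)=\{2,4\}$ has peak set $\{2,4\}$. So (2) cannot be reached by unmarking the recording tableau. The paper's proof of (2) is in fact independent of (1) and avoids all of this: $i\in\mb{Peak}(T)$ for a standard shifted tableau $T$ iff both $i-1$ and $i+1$ precede $i$ in the reading word $w(T)$; this pattern is invariant under the shifted Knuth moves (SK1)--(SK3); and since $w^{-1}\equiv_{\bs{SK}}w\bigl(P_{\bs{SW}}(w^{-1})\bigr)$, the peak set of $P_{\bs{SW}}(w^{-1})$ can be read off from $w^{-1}$ itself, where the pattern is literally the condition $w_{i-1}<w_i>w_{i+1}$.
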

\begin{proof} (1). Let $i\in[n-1]$ and suppose that $w_i<w_{i+1}$. After the insertion of $w_{i+1}$, we see that if $i$ is unprimed in $Q_{\bs{SW}}(w)$, then $i+1$ should also be unprimed and weakly upper than $i$. If both $i$ and $i+1$ are primed in $Q_{\bs{SW}}(w)$, then $(i+1)'$ should be strictly lower than $i'$ by the non-Schensted moves. Hence, it always means that $i\notin\mb{Des}(Q_{\bs{SW}}(w))$.

Now suppose that $w_i>w_{i+1}$. If $i$ is unprimed in $Q_{\bs{SW}}(w)$,
while $i+1$ is primed, then obviously $i\in\mb{Des}(Q_{\bs{SW}}(w))$ wherever $(i+1)'$ locates in $Q_{\bs{SW}}(w)$. If $i+1$ is also unprimed, then the row insertion makes $i$ to be strictly upper than $i+1$. Finally, if $i$ is primed in $Q_{\bs{SW}}(w)$, then $i+1$ should also be primed and weakly upper than $i'$. Hence, we always have $i\in\mb{Des}(Q_{\bs{SW}}(w))$.

(2). Given any standard shifted tableau $T$, $i\in\mb{Peak}(T)$ if and only if $i-1,i,i+1$ occur in $w(T)$ in the order $i-1,i+1,i$ or $i+1,i-1,i$. Meanwhile, the shifted Knuth transformations (SK1)-(SK3) can only possibly change one of these two orders of $i-1,i,i+1$ in $w$ to the other. That is, $\mb{Peak}(T)$ can be read from any permutation $w$ such that $P_{\bs{SW}}(w)=T$, as $w\equiv_{\bs{SW}}w(T)$. On the other hand, $i-1,i,i+1$ occur in $w^{-1}$ in the order $i-1,i+1,i$ or $i+1,i-1,i$ if and only if $w_{i-1}<w_i>w_{i+1}$, i.e. $i\in\mb{Peak}(w)$. Hence, $\mb{Peak}(P_{\bs{SW}}(w^{-1}))=\mb{Peak}(w)$.
\end{proof}

\begin{rem}\label{rem}
Haiman \cite{Hai} also introduced the mixed insertion ($w\mapsto(P_{\bs{mix}}(w),Q_{\bs{mix}}(w))$) and proved that it is dual to the Sagan-Worley insertion. That is, \[P_{\bs{mix}}(w)=Q_{\bs{SW}}(w^{-1}),\,
Q_{\bs{mix}}(w)=P_{\bs{SW}}(w^{-1})\]
for any permutation $w$. It generalizes the symmetry of $(P(w),Q(w))$ in the usual Schensted correspondence. Moreover, since the shifted plactic equivalence refines the plactic one \cite[Prop. 1.8]{Ser}, $P_{\bs{mix}}(w)=P_{\bs{mix}}(w')$ means $P(w)=P(w')$.
\end{rem}

\begin{exam}
For $w=612543\in\fs_6$, the shifted Schensted insertion makes
\[
\begin{array}{lllllll}
  P:\quad& {\xy 0;/r.2pc/:
(0,0)*{6};
\endxy}\,,\quad&
{\xy 0;/r.2pc/:
(0,0)*{1};(5,0)*{6};
\endxy}\,,\quad&
\raisebox{1em}{\xy 0;/r.2pc/:
(0,0)*{1};(5,0)*{2};(5,-5)*{6};
\endxy}\,,\quad&
\raisebox{1em}{\xy 0;/r.2pc/:
(0,0)*{1};(5,0)*{2};(10,0)*{5};(5,-5)*{6};
\endxy}\,,\quad&
\raisebox{1em}{\xy 0;/r.2pc/:
(0,0)*{1};(5,0)*{2};(10,0)*{4};(5,-5)*{5};(10,-5)*{6};
\endxy}\,,\quad&
\raisebox{1em}{\xy 0;/r.2pc/:
(0,0)*{1};(5,0)*{2};(10,0)*{3};(15,0)*{6};(5,-5)*{4};(10,-5)*{5};
\endxy}\vspace{1em}\\
Q:\quad& {\xy 0;/r.2pc/:
(0,0)*{1};
\endxy}\,,\quad&
{\xy 0;/r.2pc/:
(0,0)*{1};(5,0)*{2'};
\endxy}\,,\quad&
\raisebox{1em}{\xy 0;/r.2pc/:
(0,0)*{1};(5,0)*{2'};(5,-5)*{3};
\endxy}\,,\quad&
\raisebox{1em}{\xy 0;/r.2pc/:
(0,0)*{1};(5,0)*{2'};(10,0)*{4};(5,-5)*{3};
\endxy}\,,\quad&
\raisebox{1em}{\xy 0;/r.2pc/:
(0,0)*{1};(5,0)*{2'};(10,0)*{4};(5,-5)*{3};(10,-5)*{5'};
\endxy}\,,\quad&
\raisebox{1em}{\xy 0;/r.2pc/:
(0,0)*{1};(5,0)*{2'};(10,0)*{4};(15,0)*{6'};(5,-5)*{3};(10,-5)*{5'};
\endxy}
\end{array}\]
On the other hand, $w^{-1}=236541$, and
\[
\begin{array}{lllllll}
  P:\quad& {\xy 0;/r.2pc/:
(0,0)*{2};
\endxy}\,,\quad&
{\xy 0;/r.2pc/:
(0,0)*{2};(5,0)*{3};
\endxy}\,,\quad&
\xy 0;/r.2pc/:
(0,0)*{2};(5,0)*{3};(10,0)*{6};
\endxy\,,\quad&
\raisebox{1em}{\xy 0;/r.2pc/:
(0,0)*{2};(5,0)*{3};(10,0)*{5};(5,-5)*{6};
\endxy}\,,\quad&
\raisebox{1em}{\xy 0;/r.2pc/:
(0,0)*{2};(5,0)*{3};(10,0)*{4};(5,-5)*{5};(10,-5)*{6};
\endxy}\,,\quad&
\raisebox{1em}{\xy 0;/r.2pc/:
(0,0)*{1};(5,0)*{2};(10,0)*{3};(15,0)*{4};(5,-5)*{5};(10,-5)*{6};
\endxy}\vspace{1em}\\
Q:\quad& {\xy 0;/r.2pc/:
(0,0)*{1};
\endxy}\,,\quad&
{\xy 0;/r.2pc/:
(0,0)*{1};(5,0)*{2};
\endxy}\,,\quad&
\raisebox{1em}{\xy 0;/r.2pc/:
(0,0)*{1};(5,0)*{2};(10,0)*{3};
\endxy}\,,\quad&
\raisebox{1em}{\xy 0;/r.2pc/:
(0,0)*{1};(5,0)*{2};(10,0)*{3};(5,-5)*{4};
\endxy}\,,\quad&
\raisebox{1em}{\xy 0;/r.2pc/:
(0,0)*{1};(5,0)*{2};(10,0)*{3};(5,-5)*{4};(10,-5)*{5'};
\endxy}\,,\quad&
\raisebox{1em}{\xy 0;/r.2pc/:
(0,0)*{1};(5,0)*{2};(10,0)*{3};(15,0)*{6'};(5,-5)*{4};(10,-5)*{5'};
\endxy}
\end{array}\]
We can see that $\mb{Des}(Q_{\bs{SW}}(w))=\mb{Des}(w)=\{1,4,5\},
\,\mb{Des}(Q_{\bs{SW}}(w^{-1}))=\mb{Des}(w^{-1})=\{3,4,5\}$ and $\mb{Peak}(P_{\bs{SW}}(w^{-1}))=\mb{Peak}(w)=\{4\},\,
\mb{Peak}(P_{\bs{SW}}(w))=\mb{Peak}(w^{-1})=\{3\}$.
\end{exam}

\section{The shifted Poirier-Reutenauer algebra}
In this section we lift Schur's P-functions to the Malvenuto-Reutenauer algebra and define the so-called
shifted Poirier-Reutenauer algebra which contains the peak subalgebra. It is parallel to the construction of the Poirier-Reutenauer algebra, which is the lift of the Schur functions and contains the Solomon descent subalgebra.

\subsection{Preliminaries}
Gelfand et al. \cite{GKL} systematically studied the algebra of \textit{noncommutative symmetric functions}, denoted by $\ms{NSym}$.
This is a graded Hopf algebra freely generated by the $H_n, n\in\bN$ with the comultiplication given by
\beq\label{coq}
\De(H_n)=\sum_{k=0}^n H_k\ot H_{n-k},\eeq
where $H_0=1$ and the generator $H_n$ is of degree $n$.

Let \[H_\al=H_{\al_1}\cdots H_{\al_r},~\al=(\al_1,\dots,\al_r)\vDash n.\]
Then $\{H_\al\}_{\al\vDash n}$ forms a $\bZ$-basis of $\ms{NSym}_n$, called the {\it noncommutative complete symmetric functions}. There exists another important $\bZ$-basis $\{R_\al\}_{\al\vDash n}$ of $\ms{NSym}_n$, called the \textit{noncommutative ribbon Schur functions} defined by
\[R_\al=\sum_{\be\succeq\al}(-1)^{l(\be)-l(\al)}H_\be.\]

In \cite{MR} Malvenuto and Reutenauer proved that the graded Hopf dual of \ms{NSym} is the algebra \ms{QSym} of \textit{quasisymmetric functions}, which is a subring of the ring of power series $\bZ[[x_1,x_2,\dots]]$ in the commuting variables ${x_n}'s$.  \ms{QSym} has a linear basis called the \textit{monomial quasisymmetric functions}, defined by
\[M_\al:=M_\al(x)=\sum\limits_{i_1<\cdots< i_r}x_{i_1}^{\al_1}\cdots x_{i_r}^{\al_r},\]
where $\al=(\al_1,\dots,\al_r)$ varies over the composition set $\mc$. There is another important basis of \textit{fundamental quasisymmetric functions} defined by
\[F_\al:=F_\al(x)=\sum\limits_{i_1\leq\cdots\leq i_n\atop i_k<i_{k+1}\mb{ \tiny if }k\in D(\al)}x_{i_1}\cdots x_{i_n},\,\al\vDash n.\]
That means $F_\al=\sum_{\be\preceq\al}M_\be$. Meanwhile, the canonical pairing $\lan\cdot,\cdot\ran$ between \ms{NSym} and \ms{QSym} is defined by
\[\lan H_\al,M_\be\ran=\lan R_\al,F_\be\ran=\de_{\al,\be}\]
for any $\al,\be\in\mc$.

Furthermore, Malvenuto and Reutenauer defined a self-dual graded Hopf algebra structure on the free abelian
group $\mathbb{Z}\fs=\bigoplus_{n\geq0}\mathbb{Z}\fs_n$ of permutations, referred to as the \textit{MR algebra}.
We first recall the MR algebra, which is a dual pair of graded Hopf algebras $(\mathbb{Z}\mathfrak{S},*,\De)$
and $(\mathbb{Z}\mathfrak{S},*',\De')$, denoted $\ms{MR}$ and $\ms{MR}'$ respectively \cite{AgS,MR}.
For any word $w$ of length $n$ in a totally ordered alphabet $A$, denote by $\us{alph}(w)\subset A$ the set of letters in $w$ and $\us{st}(w)\in\mathfrak{S}_n$, the \textit{standardization} of $w$, which is
the well-defined permutation given by
\[\us{st}(w)(i)<\us{st}(w)(j)\mbox{ if and only if }w_i<w_j\mbox{ or }w_i=w_j,i<j,\]
where $w=w_1\cdots w_n$.
%Note that $\ell(\us{st}(w))$ is just the inversion number of $w$.
For any $w\in\mathfrak{S}_n$, one can view it as a word in $[n]$. For $I\subseteq[n]$, let $w|_I$ denote the subword of $w$ keeping only the digits in $I$. We also need the shuffle product \sh and the concatenation coproduct $\de'$. The product \sh is recursively defined by
\[u\sh\emptyset=\emptyset\sh u=u,\,u\sh v=u_1(u'\sh v)+v_1(u\sh v'),\]
for the words $u=u_1\cdots u_p,\,v=v_1\cdots v_q$, where $u'=u_2\cdots u_p,\,v'=v_2\cdots v_q$. The coproduct $\de'$ is defined by
\[\de'(w)=\sum_{i=0}^n w_1\cdots w_i\ot w_{i+1}\cdots w_n\]
for any word $w=w_1\cdots w_n$.

The Hopf algebra structure of the MR algebra can be described as follows. For any $w\in\mathfrak{S}_p,~w'\in\mathfrak{S}_q$,
\beq\label{mc}
w*w'=\sum\limits_{\us{\ti{alph}}(u)\cup\us{\ti{alph}}(v)=[p+q]
\atop\us{\ti{st}}(u)=w,~\us{\ti{st}}(v)=w'}uv,~
\De(w)=\sum\limits_{i=0}^n\us{st}(w|_{[1,i]})\ot\us{st}(w|_{[i+1,n]}).
\eeq
On the other hand,
\beq\label{mc'}
w*'w'=w\sh\ol{w'},~
\De'(w)=(\us{st}\ot\us{st})\de'(w),
\eeq
where $\ol{w'}$ means shifting the digits in $w'$ by $p$.

Under the canonical pairing of $\mathbb{Z}\mathfrak{S}$ defined by $\lan w,w'\ran=\de_{w,w'}$, $\ms{MR}$ and $\ms{MR}'$ are dual to each other as graded Hopf algebras.
Meanwhile, there exists an involution $\eta$ of graded Hopf algebras between them defined by $\eta(w)=w^{-1}$ and the embedding $\iota:\ms{NSym}\rightarrow \ms{MR}$ defined by $\iota(H_\al)=D_{\succeq\al}$ (resp. $\iota(R_\al)=D_\al$), where $D_{\succeq\al}=\sum_{c(w)\succeq\al}w$ (resp. $D_\al=\sum_{c(w)=\al}w$). This embedding identifies $\ms{NSym}$ with the \textit{Solomon descent subalgebra} of $\mathbb{Z}\mathfrak{S}$ with a basis $\{D_{\succeq\al}:\al\in\mc\}$.
Dually there exists a surjection $\pi':\ms{MR}'\rightarrow \ms{QSym}$ defined by $\pi'(w)=F_{c(w)}$. That is, $\lan\iota(F),w\ran=\lan F,\pi'(w)\ran$ for any $F\in\ms{NSym}$ and $w\in\fs$.

On the other hand, the descent algebra of $\mathbb{Z}\mathfrak{S}$ has a subalgebra $\ms{Peak}$, called the \textit{peak subalgebra}. It has a natural basis consisting of
\[\Pi_P:=\sum\limits_{\bs{Peak}(w)=P}w,\]
where $P$ runs over all peak sets. The graded Hopf dual $\ms{Peak}^*$ is isomorphic to the \textit{Stembridge algebra of peak functions} \cite{Ste}, realized as a Hopf subalgebra in $\ms{QSym}$ with a basis consisting of the \textit{peak functions}
\[K_P:=\sum_{\al\vDash n\atop P\subseteq D(\al)\cup(D(\al)+1)}2^{\ell(\al)}M_\al,\]
where $P$ varies over all peak sets in $[n-1]$. Also,
\beq\label{kf}K_P=2^{|P|+1}\sum_{\al\vDash n\atop P\subseteq D(\al)\triangle(D(\al)+1)}F_\al,\eeq
where $D\triangle(D+1)=D\backslash(D+1)\cup (D+1)\backslash D$ for any $D\subseteq[n-1]$.

The Hopf pairing $[\cdot,\cdot]$ between $\ms{Peak}$ and $\ms{Peak}^*$ is defined by
\[[\Pi_P,K_Q]=\de_{P,Q}\]
for peak sets $P,Q$. Moreover, there exists the \textit{descent-to-peak map} \cite{Ste}
\[\vt:\ms{QSym}\rw\ms{Peak}^*,\,F_\al\mapsto K_{\bs{Peak}(\al)}\]
as a Hopf algebra projection, which is dual to the \textit{descent-to-peak transform} $\Te$ from $\ms{NSym}$ onto $\ms{Peak}$ defined by (see \cite{KLT}, \cite{Sch})
\[\Te(H_\al)=2^{\ell(\al)}\sum_{P\ti{ peak set in }[n-1]\atop P\subseteq D(\al)\cup(D(\al)+1)}\Pi_P,\,\al\vDash n.\]
That is, $\lan\Te(F),f\ran=\lan F,\vt(f)\ran$ for any $F\in\ms{NSym},f\in\ms{QSym}$. By \cite[Prop. 5.5]{Sch}, we have
\beq \Te(R_\al)=\sum_{P\ti{ peak set in } [n-1]
\atop P\subseteq D(\al)\triangle(D(\al)+1)}2^{|P|+1}\Pi_P,\,\al\vDash n.\eeq

Now let $\La$ be the graded ring of symmetric functions in the commuting variables $x_1,x_2,\dots$, with integral coefficients,  and $\Om$ be the subring of $\La$ generated by the symmetric functions $q_n\,(n\geq1)$ defined by
\[\sum_{n\geq0}q_nz^n=\prod_{i\geq1}\dfrac{1+x_iz}{1-x_iz}.\]
For the basics of this subring $\Om$ and Schur's Q-functions, one can refer to \cite[Ch. III, \S 8]{Mac}, where $\Om$ is denoted as $\Ga$.
There exist two Hopf algebra epimorphisms. One is
\[\te:\La\rw\Om,\quad h_n\mapsto q_n,\,n\geq1\]
%Then $\te(p_n)=(1-(-1)^n)p_n,\,n\geq1$.
and the other is the \textit{forgetful map}
\[\phi:\ms{NSym}\rw\La,\,H_n\mapsto h_n,\,n\geq1\]
such that the following commutative diagrams hold:
\[\xymatrix@=2em{\mb{NSym}\ar@{->}[r]^-{\Te}\ar@{->}[d]_-{\phi}
&\mP\ar@{->}[d]^-{\phi}\\
\La\ar@{->}[r]^-{\te}&\Om},\quad
\xymatrix@=2em{\mb{QSym}\ar@{->}[r]^-{\vt}
&\mB\\
\La\ar@{->}[r]^-{\te}\ar@{->}[u]&\Om\ar@{->}[u]},\]
where the vertical maps in the second diagram are inclusions. Note that $\La$ and $\Om$ are self-dual graded Hopf algebras with the natural Hopf pairings $\lan\cdot,\cdot\ran$ and $[\cdot,\cdot]$ respectively. Then
\beq\label{bil}\begin{cases}
\,\lan\phi(F),f\ran=\lan F,f\ran,&F\in\ms{NSym},f\in\La,\\
\,[\phi(F),f]=[F,f],&F\in\ms{Peak},f\in\Om,\\
\,\lan F,f\ran=[\Te(F),f],&F\in\ms{NSym},f\in\ms{Peak}^*,\\
\,\lan F,f\ran=[F,\vt(f)],&F\in\ms{Peak},f\in\ms{QSym},\\
\,\lan \Te(F),f\ran=\lan F,\vt(f)\ran,&F\in\ms{NSym},f\in\ms{QSym}.
\end{cases}\eeq

\subsection{The shifted analog of Poirier-Reutenauer algebras}

Note that (SK1), (SK2) generate the usual \textit{Knuth equivalence}, denoted by $\equiv_{\bs{K}}$. Moreover, the $\bZ$-submodule $J_{\bs{K}}$ spanned by
\[\{u-v\,:\,u,v\mb{ permutations},\,u\equiv_{\bs{K}}v\}\]
is actually a Hopf ideal in $\ms{MR}$, which induces the Hopf quotient $\ms{PR}$, called the Poirier-Reutenauer algebra (PR for short) \cite{PR}. Dually, one can define a Hopf subalgebra $\ms{PR}'$ of $\ms{MR}'$ as the orthogonal complement of $J_{\bs{K}}$ with respect to the canonical pairing. That is,
\[\ms{PR}'=\bigoplus\limits_{T\in\us{\s{SYT}}}\bZ\cdot cl(T), ~cl(T)=\sum\limits_{w\equiv_{\ti{K}}w(T)}w=\sum\limits_{P(w)=T}w,\]
where $P(w)$ is the insertion tableau of $w$ under the Schensted insertion. The dual of $cl(T)$ in $\ms{PR}$ is the Knuth equivalence class $[w]$ such that $P(w)=T$, and we denote it by $[T]$.

A remarkable property of the PR algebra $\ms{PR}'$ is that it can factor through the embedding $\eta\circ\iota:\ms{NSym}\rightarrow\ms{MR}'$ and has the image $\La$ when projecting to $\ms{QSym}$ via $\pi'$. In fact, $\eta\circ\iota(R_\al)=\sum_{c(T)=\al}cl(T)$ for any $\al\vDash n$. Moreover,  $\pi'(cl(T)),\,T\in\mb{SYT}(\la)$, is the \textit{Schur function} $s_\la$ with the following combinatorial expression,
\[s_\la=\sum\limits_{T\in\us{\s{SSYT}}(\la)}x^{\us{\s{wt}}(T)}
=\sum\limits_{T\in\ti{SYT}(\la)}F_{c(T)}.\]
In summary, the following commutative  diagram of Hopf algebras holds (see also \cite[Sect. 1]{DHT}):
\begin{equation}\label{dia}
\xymatrix@H=1.5em{
\ms{NSym }\ar@(ur,ul)[rr]^-{\phi} \ar@{>->}[d]^-\iota \ar@{>->}[r]& \ms{PR}' \ar@{>>}[r]^-{\pi'}\ar@{>->}[d]&\La\ar@{>->}[d]\\
\ms{MR }\ar[r]^-{\eta\atop\sim} & \ms{MR}' \ar@{>>}[r]^-{\pi'}&\ms{QSym}\quad.
}
\end{equation}
The dual diagram is
\begin{equation}\label{dia1}
\xymatrix@H=1.5em{
\ms{NSym }\ar@{>->}[r]^-\iota \ar@{>>}[d]^-\phi&
\ms{MR }\ar@{>>}[d]\ar[r]^-{\eta\atop\sim}&\ms{MR}' \ar@{>>}[d]^-{\pi'}\\{\La\,}\ar@(dr,dl)[rr]
\ar@{>->}[r] & \ms{PR} \ar@{>>}[r]&\ms{QSym}\quad,}
\end{equation}

\medskip
\noindent
where the embedding of $\La$ into $\ms{PR}$ maps the Schur function $s_\la$ to $\sum_{T\in\bs{SYT}(\la)}[T]$, while the projection of $\ms{PR}$ onto $\ms{QSym}$ maps $[T]$ to $F_{c(T)}$.

Now we seek for the shifted analog of the above construction. First, it is natural to ask what structure does the $\bZ$-submodule $J_{\bs{SK}}$ of $\ms{MR}$ defined from shifted Knuth equivalence possess, where $J_{\bs{SK}}$ is spanned by
\[\{u-v\,:\,u,v\mb{ permutations},\,u\equiv_{\bs{SK}}v\}.\]

Let $\ms{SPR}:=\ms{MR}/J_{\bs{SK}}$, then we have the following result.

\begin{theorem}\label{spr}
$\ms{SPR}$ is a right $\ms{PR}$-module and a quotient coalgebra of $\ms{PR}$.
\end{theorem}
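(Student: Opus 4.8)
The plan is to work entirely at the level of the defining submodule $J_{\bs{SK}}\subseteq\ms{MR}$, exploiting the fact noted just before the statement that the relations (SK1), (SK2) already generate the ordinary Knuth equivalence $\equiv_{\bs{K}}$. This immediately gives $J_{\bs{K}}\subseteq J_{\bs{SK}}$, so that $\ms{SPR}=\ms{MR}/J_{\bs{SK}}$ is a quotient of $\ms{PR}=\ms{MR}/J_{\bs{K}}$ and it suffices to analyse the sole extra relation (SK3). Everywhere I would reduce each claim to a single elementary move, since $J_{\bs{SK}}$ is spanned by elementary differences $u-v$ and any chain of moves telescopes.

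\textbf{Coalgebra part.} First I would show $J_{\bs{SK}}$ is a coideal of $\ms{MR}$, i.e. $\De(J_{\bs{SK}})\subseteq J_{\bs{SK}}\ot\ms{MR}+\ms{MR}\ot J_{\bs{SK}}$ together with $\varepsilon(J_{\bs{SK}})=0$; the counit condition is trivial since equivalent permutations have equal length. Generators coming from (SK1), (SK2) already lie in $J_{\bs{K}}$, a coideal, so only an (SK3) generator $w-w'$ need be checked, where $w=ab\,u$, $w'=ba\,u$ and $a,b$ are the first two letters. Using the value-splitting coproduct $\De(w)=\sum_{i=0}^{n}\us{st}(w|_{[1,i]})\ot\us{st}(w|_{[i+1,n]})$, I would split according to the cut $i$. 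When both $a,b\le i$ or both $a,b>i$, exactly one tensor factor is unchanged while the other is an (SK3) difference of standardizations (the two relabellings still differ only in their first two letters), so the term lands in $J_{\bs{SK}}\ot\ms{MR}$ or $\ms{MR}\ot J_{\bs{SK}}$. When $a,b$ straddle $i$, deleting the letter on the wrong side of the cut makes both restrictions $w|_{[1,i]}$ and $w|_{[i+1,n]}$ literally equal to those of $w'$, so the contribution cancels. Summing over $i$ yields the coideal property, whence $\ms{SPR}$ is a quotient coalgebra of $\ms{MR}$ and, through $J_{\bs{K}}\subseteq J_{\bs{SK}}$, of $\ms{PR}$.

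\textbf{Module part.} For the right $\ms{PR}$-action I would show that $J_{\bs{SK}}$ is a \emph{right} ideal of $(\ms{MR},*)$, i.e. $J_{\bs{SK}}*\ms{MR}\subseteq J_{\bs{SK}}$; together with the fact recalled in the text that $J_{\bs{K}}$ is a two-sided ideal, giving $\ms{MR}*J_{\bs{K}}\subseteq J_{\bs{K}}\subseteq J_{\bs{SK}}$, this makes $*$ descend to a well-defined pairing $\ms{SPR}\ot\ms{PR}\to\ms{SPR}$, $(\bar x,\bar y)\mapsto\overline{x*y}$, whose module axioms are inherited from associativity of $*$. To prove the right-ideal property, fix $u_1\equiv_{\bs{SK}}u_2$ in $\fs_p$ related by one elementary move and any $w\in\fs_q$. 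In the defining sum for $u_j*w$ I would index the terms by the $p$-subset $A\subseteq[p+q]$ carrying the left factor: for each $A$ there are unique order-preserving relabellings $s,s'$ of $u_1,u_2$ on $A$ and a unique relabelling $t$ of $w$ on $[p+q]\setminus A$, so that $u_1*w-u_2*w=\sum_A(st-s't)$. Since relabelling preserves relative order, the elementary move turns $s$ into $s'$; as it occupies positions inside the initial block of length $p$, it is still a legal shifted Knuth move on the permutation $st\in\fs_{p+q}$, so $st\equiv_{\bs{SK}}s't$ and each summand lies in $J_{\bs{SK}}$.

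\textbf{Main obstacle.} The decisive point throughout is (SK3), which is also the source of the asymmetry in the statement. In the product $u*w$ the left factor sits at the \emph{front} of the concatenation $st$, so a swap of the first two letters of $u$ remains a swap of the first two letters of $st$ and (SK3) applies; under $w*u$ the same pair would be pushed past the first $q$ positions, where (SK3) is no longer available, so $J_{\bs{SK}}$ is not a left ideal and one obtains only a right module rather than an algebra quotient. I would take particular care, in the straddling case of the coproduct computation, to track correctly which letter is deleted from each restriction, since that is the only place where an index slip could go unnoticed.
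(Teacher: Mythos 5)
Your proposal is correct and follows essentially the same route as the paper's own proof: reduce to the single extra generator (SK3) via $J_{\bs{K}}\subseteq J_{\bs{SK}}$, show the right-ideal property by observing that the initial-swap commutes with the relabellings in the product formula (so each term $st-s't$ is again an (SK3) difference), and show the coideal property by the same three-case analysis on where the first two letters fall relative to the cut $i$, with the straddling terms cancelling. Your write-up is in fact slightly more careful than the paper's in two minor respects — you state the coideal condition $\De(J_{\bs{SK}})\subseteq J_{\bs{SK}}\ot\ms{MR}+\ms{MR}\ot J_{\bs{SK}}$ precisely, and you spell out why the action of $\ms{PR}$ (not just $\ms{MR}$) is well defined via $\ms{MR}*J_{\bs{K}}\subseteq J_{\bs{K}}\subseteq J_{\bs{SK}}$ — but these are refinements of the identical argument, not a different one.
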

\begin{proof}
As $J_{\bs{K}}\subset J_{\bs{SK}}$, it is equivalent to prove that $J_{\bs{SK}}$ is a right ideal and a coideal of $\ms{MR}$. Since $J_{\bs{K}}$ is known to be a Hopf ideal of $\ms{MR}$, we only need to focus on the shifted Knuth transformation (SK3). For any word $w=w_1w_2\cdots w_p,\,p\geq2$, let $\tilde{w}:=w_2w_1w_3\cdots w_p$, then for any $w\in\fs_p,\,w'\in\fs_q,\,p\geq2$,
\[(w-\tilde{w})*w'=\sum\limits_{\us{\ti{alph}}(u)\cup\us{\ti{alph}}(v)=[p+q]
\atop\us{\ti{st}}(u)=w,~\us{\ti{st}}(v)=w'}(u-\tilde{u})v\in J_{\bs{SK}},\]
since obviously $\mb{st}(\tilde{u})=\widetilde{\mb{st}(u)}=\tilde{w}$. On the other hand, for any fixed $i=0,\dots,n$, if $w_1,w_2\in[1,i]$ (resp. $w_1,w_2\in[i+1,n]$), then $w|_{[1,i]}-\tilde{w}|_{[1,i]}$ (resp. $\mb{st}(w|_{[i+1,n]})-\mb{st}(\tilde{w}|_{[i+1,n]})$) lies in $J_{\bs{SK}}$. Otherwise, $w_1,w_2$ lie in $[1,i]$ and $[i+1,n]$ separately, hence $w|_{[1,i]}=\tilde{w}|_{[1,i]},\,w|_{[i+1,n]}=
\tilde{w}|_{[i+1,n]}$ by definition, and the term in $\De(w-\tilde{w})$ for such $i$ vanishes. In summary, we have $\De(w-\tilde{w})\in J_{\bs{SK}}$.
\end{proof}

 We remark that $w*(w'-\tilde{w'})\notin J_{\bs{SK}}$ in general, thus $\ms{SPR}$ is \textit{not} a Hopf quotient of $\ms{PR}$. A simple counterexample is the following:
\[\begin{array}{ccccccccccc}
12*123: & 12345 & 13245 & 14235 & 15234 & 23145 & 24135 & 25134 & 34125 & 35124 & 45123\vspace{.5em}\\
P_{\bs{SW}}: &\raisebox{.5em}{\xy 0;/r.1pc/:
(0,0)*{1};(5,0)*{2};(10,0)*{3};(15,0)*{4};(20,0)*{5};
\endxy} &
\raisebox{1em}{\xy 0;/r.1pc/:
(0,0)*{1};(5,0)*{2};(10,0)*{4};(15,0)*{5};(5,-8)*{3};
\endxy} &
\raisebox{1em}{\xy 0;/r.1pc/:
(0,0)*{1};(5,0)*{2};(10,0)*{3};(15,0)*{5};(5,-8)*{4};
\endxy}&
\raisebox{1em}{\xy 0;/r.1pc/:
(0,0)*{1};(5,0)*{2};(10,0)*{3};(15,0)*{4};(5,-8)*{5};
\endxy}&
\raisebox{.5em}{\xy 0;/r.1pc/:
(0,0)*{1};(5,0)*{2};(10,0)*{3};(15,0)*{4};(20,0)*{5};
\endxy} &
\raisebox{1em}{\xy 0;/r.1pc/:
(0,0)*{1};(5,0)*{2};(10,0)*{3};(15,0)*{5};(5,-8)*{4};
\endxy}&
\raisebox{1em}{\xy 0;/r.1pc/:
(0,0)*{1};(5,0)*{2};(10,0)*{3};(15,0)*{4};(5,-8)*{5};
\endxy} &
\raisebox{1em}{\xy 0;/r.1pc/:
(0,0)*{1};(5,0)*{2};(10,0)*{4};(15,0)*{5};(5,-8)*{3};
\endxy}&
\raisebox{1em}{\xy 0;/r.1pc/:
(0,0)*{1};(5,0)*{2};(10,0)*{4};(5,-8)*{3};(10,-8)*{5};
\endxy}&
\raisebox{1em}{\xy 0;/r.1pc/:
(0,0)*{1};(5,0)*{2};(10,0)*{3};(5,-8)*{4};(10,-8)*{5};
\endxy}\vspace{.5em}\\
12*213: & 12435 & 13425 & 14325 & 15324 & 23415 & 24315 & 25314 & 34215 & 35214 & 45213 \vspace{.5em}\\
P_{\bs{SW}}: &\raisebox{1em}{\xy 0;/r.1pc/:
(0,0)*{1};(5,0)*{2};(10,0)*{3};(15,0)*{5};(5,-8)*{4};
\endxy} &
\raisebox{1em}{\xy 0;/r.1pc/:
(0,0)*{1};(5,0)*{2};(10,0)*{4};(15,0)*{5};(5,-8)*{3};
\endxy} &
\raisebox{1em}{\xy 0;/r.1pc/:
(0,0)*{1};(5,0)*{2};(10,0)*{4};(15,0)*{5};(5,-8)*{3};
\endxy}&
\raisebox{1em}{\xy 0;/r.1pc/:
(0,0)*{1};(5,0)*{2};(10,0)*{4};(5,-8)*{3};(10,-8)*{5};
\endxy}&
\raisebox{.5em}{\xy 0;/r.1pc/:
(0,0)*{1};(5,0)*{2};(10,0)*{3};(15,0)*{4};(20,0)*{5};
\endxy}&
\raisebox{1em}{\xy 0;/r.1pc/:
(0,0)*{1};(5,0)*{2};(10,0)*{3};(15,0)*{5};(5,-8)*{4};
\endxy}&
\raisebox{1em}{\xy 0;/r.1pc/:
(0,0)*{1};(5,0)*{2};(10,0)*{3};(15,0)*{4};(5,-8)*{5};
\endxy} &
\raisebox{.5em}{\xy 0;/r.1pc/:
(0,0)*{1};(5,0)*{2};(10,0)*{3};(15,0)*{4};(20,0)*{5};
\endxy}&
\raisebox{1em}{\xy 0;/r.1pc/:
(0,0)*{1};(5,0)*{2};(10,0)*{3};(15,0)*{4};(5,-8)*{5};
\endxy}&
\raisebox{1em}{\xy 0;/r.1pc/:
(0,0)*{1};(5,0)*{2};(10,0)*{3};(15,0)*{5};(5,-8)*{4};
\endxy}
\end{array}\]
Since the insertion tableaux at the two rows are not the same, we know that $12*(123-213)\notin J_{\bs{SK}}$.

%For example, $(12-21)*12=(1234-2134)+(1324-3124)+(1423-4123)
%+(2314-3214)+(2413-4213)+(3412-4312)\in J_{\bs{SK}}$.

\begin{defn}
Let $\ms{SPR}':=J_{\bs{SK}}^\perp$, the orthogonal complement of $J_{\bs{SK}}$ with respect to the canonical pairing, then it is dually a right coideal subalgebra of $\ms{PR}'$ by Theorem \ref{spr}. More explicitly,
\[\ms{SPR}'=\bigoplus\limits_{T\in\us{\s{ShSYT}}}\bZ\cdot scl(T), ~scl(T)=\sum\limits_{w\equiv_{\ti{SK}}w(T)}w=\sum\limits_{P_{\ti{SW}}(w)=T}w.\]
as $P_{\ti{SW}}(w(T))=T$. We call $\ms{SPR}$ and $\ms{SPR}'$ the \textit{shifted Poirier-Reutenauer algebras}.
\end{defn}
The dual of $scl(T)$ in $\ms{SPR}$ is the shifted Knuth equivalence class $\lan w\ran$ such that $P_{\ti{SW}}(w)=T$, and we denote it by $\lan T\ran$. Meanwhile, $\ms{SPR}'$ naturally embeds into $\ms{PR}'$, since for any $T\in\mb{ShSYT}_n$,
\beq\label{ss}scl(T)=\sum\limits_{P_{\ti{SW}}(w)=T}w
=\sum\limits_U\lb\sum\limits_{P(w)=U}w\rb=\sum\limits_Ucl(U),\eeq
where $U$ runs over all standard tableaux of size $n$ such that $P_{\bs{SW}}(w(U))=T$.

\medskip
Next we consider all structure maps of $\ms{SPR}$ and $\ms{SPR}'$.
For these we also need the notion of \textit{rectification} of skew shifted tableaux \cite[\S 6]{Sag}. Given $S\in\us{ShSSYT}(\la/\mu)$, the rectification $\us{rect}(S)$ is the shifted tableau obtained from $S$ by carrying out Sch\"{u}tzenberger's \textit{slide} (or ``jeu de taquin'') repeatedly. Note that $w(\us{rect}(S))\equiv_{\bs{SK}}w(S)$ for any skew shifted tableau $S$. On the other hand, given $T\in\us{ShSYT}(\mu),~S\in\us{ShSYT}(\la/\mu)$ for $\mu\vdash n$, one shifts all entries of $S$ by $n$, and concatenates the resulting skew shifted tableau with $T$ naturally. It gives a standard shifted tableau of shape $\la$, denoted by $(T)_S$. For instance,
\[T=\raisebox{1.2em}{\xy 0;/r.16pc/:
(0,0)*{};(15,0)*{}**\dir{-};
(0,-5)*{};(15,-5)*{}**\dir{-};
(5,-10)*{};(10,-10)*{}**\dir{-};
(0,0)*{};(0,-5)*{}**\dir{-};
(5,0)*{};(5,-10)*{}**\dir{-};
(10,0)*{};(10,-10)*{}**\dir{-};
(15,0)*{};(15,-5)*{}**\dir{-};
(2.5,-2.5)*{\stt{1}};(7.5,-2.5)*{\stt{2}};
(12.5,-2.5)*{\stt{4}};(7.5,-7.5)*{\stt{3}};
\endxy}~,~S=\raisebox{1.6em}{\xy 0;/r.16pc/:
(5,0)*{};(10,0)*{}**\dir{-};
(0,-5)*{};(10,-5)*{}**\dir{-};
(0,-10)*{};(10,-10)*{}**\dir{-};
(0,-15)*{};(5,-15)*{}**\dir{-};
(0,-5)*{};(0,-15)*{}**\dir{-};
(5,0)*{};(5,-15)*{}**\dir{-};
(10,0)*{};(10,-10)*{}**\dir{-};
(7.5,-2.5)*{\stt{2}};(2.5,-7.5)*{\stt{1}};
(7.5,-7.5)*{\stt{4}};(2.5,-12.5)*{\stt{3}};
\endxy}~,~
(T)_S=\raisebox{1.6em}{\xy 0;/r.16pc/:
(0,0)*{};(20,0)*{}**\dir{-};
(0,-5)*{};(20,-5)*{}**\dir{-};
(5,-10)*{};(20,-10)*{}**\dir{-};
(10,-15)*{};(15,-15)*{}**\dir{-};
(0,0)*{};(0,-5)*{}**\dir{-};
(5,0)*{};(5,-10)*{}**\dir{-};
(10,0)*{};(10,-15)*{}**\dir{-};
(15,0)*{};(15,-15)*{}**\dir{-};
(20,0)*{};(20,-10)*{}**\dir{-};
(2.5,-2.5)*{\stt{1}};(7.5,-2.5)*{\stt{2}};(12.5,-2.5)*{\stt{4}};
(17.5,-2.5)*{\stt{6}};(7.5,-7.5)*{\stt{3}};(12.5,-7.5)*{\stt{5}};
(17.5,-7.5)*{\stt{8}};(12.5,-12.5)*{\stt{7}};
\endxy}~.\]

By formula (\ref{mc}) and Theorem \ref{spr}, we know that
\beq\label{mul}\lan w\ran*[w']=\sum\limits_{\us{\ti{alph}}(u)\cup\us{\ti{alph}}(v)=[p+q]
\atop\us{\ti{st}}(u)=w,~\us{\ti{st}}(v)=w'}\lan uv\ran,\,
\De(\lan w\ran)=\sum\limits_{i=0}^p\lan w|_{[1,i]}\ran\ot\lan\us{st}(w|_{[i+1,p]})\ran\eeq
for any $w\in\fs_p,\,w'\in\fs_q$.

For any $T\in\mb{ShSSYT}$ and $T'\in\mb{SSYT}$, we define
\[T\cdot T':=P_{\bs{SW}}(w(T)w(T')).\]
Now (\ref{mul}) can be transferred to the following formulas by the language of tableaux, that is,
\beq\label{mul1}\lan T_1\ran*[T_2]=\sum\limits_{T\cdot T'\in\ti{ShSYT}
\atop\us{\ti{st}}(T)=T_1,~\us{\ti{st}}(T')=T_2}\lan T\cdot T'\ran,\eeq
for any $T_1\in\mb{ShSYT}$, $T_2\in\mb{SYT}$, and
\beq\label{com}\De(\lan T\ran)=\sum\limits_{T=(T')_S}\lan T'\ran\ot\lan\mb{rect}(S)\ran.\eeq
for any $T\in\mb{ShSYT}$.

Dually, we have
\beq\label{pr'm}
scl(T_1)*' scl(T_2)=\sum\limits_{T=(T_1)_S \atop \us{\ti{rect}}(S)=T_2}scl(T).
\eeq
and
\beq\De'(scl(T))=\sum\limits_{T_1\in{\ti{ShSSYT}},\,T_2\in{\ti{SSYT}}\atop T=T_1\cdot T_2}scl(\mb{st}(T_1))\ot cl(\mb{st}(T_2)),\eeq

\subsection{$\ms{SPR}'$ as a lift of Schur's P-functions}
In this subsection, we convince the reader that $\ms{SPR}'$ is exactly the desired shifted analog of $\ms{PR}'$.

For any strict partition $\la$, \textit{Schur's P-function} $P_\la\in\Om$ is defined by \cite[\S III, ($8.16'$)]{Mac}
\[P_\la=\sum_{S\in\bs{ShSSYT}^\pm(\la)}x^{\us{\s{wt}}(S)}.\]
And \textit{Schur's Q-function} $Q_\la:=2^{\ell(\la)}P_\la$ so that $[P_\la,Q_\mu]=\de_{\la,\mu}$.

On the other hand, the following expansion formula is pointed out in \cite[Prop. 3.3]{As}.
\beq\label{pf}
P_\la=\sum_{S\in\bs{ShSYT}^\pm(\la)}F_{c(S)}.\eeq

In order to clarify formula (\ref{pf}), one needs the following fundamental bijection, as the shifted analog of that in \cite[Prop. 5.3.6]{Sag1}.
\begin{prop}\label{bij}
There exists a one-to-one correspondence between $\mb{ShSSYT}^\pm$ and
the set $\{(T,\ga)\in\mb{ShSYT}^\pm(\la)\times\mc_w\,:\,\ga\preceq c(T)\}$, mapping $S\in\mb{ShSSYT}^\pm$ to $(\mb{st}(S),\mb{wt}(S))$.
\end{prop}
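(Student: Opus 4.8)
The plan is to produce an explicit, shape-preserving two-sided inverse to $\Phi\colon S\mapsto(\mb{st}(S),\mb{wt}(S))$ by \emph{destandardization}, following the template of \cite[Prop.~5.3.6]{Sag1} but with careful attention to the primes. Fix $n=|\la|$. Given $(T,\ga)$ with $T\in\mb{ShSYT}^\pm(\la)$ and $\ga=(\ga_1,\ga_2,\dots)\preceq c(T)$, I break $\{1,\dots,n\}$ into the consecutive blocks $B_1,B_2,\dots$ of sizes $\ga_1,\ga_2,\dots$, so that the set of block boundaries is exactly $D(\ga)$, and I define $\Psi(T,\ga)$ to be the filling of $\la$ obtained by replacing each label $j\in B_i$ by $i$, leaving its cell and its prime untouched (so $j\mapsto i$, and a primed $j$ maps to $i'$). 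Two requirements are then immediate: $\Psi(T,\ga)$ carries no primed letter on the main diagonal because $T$ does not and cells and primes are preserved, and $\mb{wt}(\Psi(T,\ga))=\ga$ by construction.

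The technical core is a positional reading of the descent rule. Unravelling the definition of $\mb{Des}$ for a marked-standard shifted tableau shows that $i\notin\mb{Des}(T)$ occurs in exactly one of three ways: $i$ is primed and $i+1$ unprimed; both are primed and $i+1$ lies strictly below $i$; or both are unprimed and $i+1$ lies weakly above $i$. In particular the transition from an unprimed $i$ to a primed $i+1$ always creates a descent, so inside any run of consecutive non-descents the primed labels form an initial segment and the unprimed labels a final segment, with rows strictly increasing along the primed part and weakly decreasing along the unprimed part. Granting this, $\Phi$ is well defined, i.e. $\mb{wt}(S)\preceq c(\mb{st}(S))$, equivalently $\mb{Des}(\mb{st}(S))\subseteq D(\mb{wt}(S))$: if $i\notin D(\mb{wt}(S))$ the two cells bearing the labels $i,i+1$ of $\mb{st}(S)$ hold the same value $k$ of $S$, and the standardization convention—primed $k'$ numbered top-to-bottom, unprimed $k$ numbered left-to-right—lands them in precisely one of the three non-descent configurations. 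The only case needing genuine tableau combinatorics is the unprimed one, for which I prove the shifted horizontal-strip lemma: the cells of a fixed unprimed value $k$ lie in distinct columns and, read left-to-right, have strictly increasing columns and weakly decreasing rows; this follows by comparing the entry directly above the right-hand cell against column strictness, using that the right ends of rows weakly decrease in a shifted shape.

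For the reverse direction I verify that $S=\Psi(T,\ga)$ lies in $\mb{ShSSYT}^\pm(\la)$. Weak monotonicity of rows and columns holds because, inside each block, the primed labels precede the unprimed ones (the no-unprimed-to-primed fact above), so two cells whose labels obey the $X'$-order are sent to values in the same order. It remains to secure the two strictness axioms—no repeated unprimed value in a column and no repeated primed value in a row—and only two local patterns can violate them: two vertically adjacent unprimed cells, or two horizontally adjacent primed cells, receiving a common value; in each I must find a block boundary strictly between the two labels $a<b$. Since $\mb{Des}(T)\subseteq D(\ga)$, it suffices to exhibit a descent in $[a,b-1]$, and the positional lemma supplies it: were every index of $[a,b-1]$ a non-descent, the forbidden unprimed-to-primed step could not occur, forcing $a,\dots,b$ to share the type of their endpoints; the unprimed run would then have weakly decreasing rows, contradicting the strictly increasing rows of a vertical pair, and the primed run strictly increasing rows, contradicting the equal rows of a horizontal pair. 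Finally $\Psi\circ\Phi=\mb{id}$ and $\Phi\circ\Psi=\mb{id}$ reduce to bookkeeping: destandardizing $\mb{st}(S)$ by $\mb{wt}(S)$ restores $S$, while restandardizing $\Psi(T,\ga)$ returns $T$ because, within every block, the forced order on primed cells (top-to-bottom) and on unprimed cells (left-to-right, with strictly increasing columns by the strip lemma) already agrees with the label order of $T$. I expect the main obstacle to be exactly this prime bookkeeping—keeping the horizontal-strip lemma, the primed-before-unprimed block structure, and the diagonal constraint simultaneously consistent—rather than any single hard estimate.
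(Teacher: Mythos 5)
Your proof is correct and follows essentially the same route as the paper's: check that standardization forces $\mb{Des}(\mb{st}(S))\subseteq D(\mb{wt}(S))$ via the descent/non-descent case analysis, then invert by explicit destandardization. The paper leaves both halves as brief sketches ("easy to deduce by definition" and "one can construct the marked shifted tableau backward"), whereas you supply the supporting details—the three-case non-descent classification, the shifted horizontal-strip lemma, and the primed-initial-segment block structure—so your write-up is a fleshed-out version of the paper's own argument.
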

\begin{proof}
First we check that for $S\in\mb{ShSSYT}^\pm$, $\mb{wt}(S)\preceq c(\mb{st}(S))$. That is, if $i\in\mb{Des}(\mb{st}(S))$, then when doing the standardization, the $i$th and $(i+1)$th numbers in $S$ should be distinct after removing their primes if necessary. Since $i\in\mb{Des}(\mb{st}(S))$ means one of the following two cases holds in $\mb{st}(S)$:

(1) $i$ is unprimed and strictly upper than $i+1$,

(2) $i+1$ is primed and weakly upper than $i$,

\noindent
it is easy to deduce the desired result by definition. Conversely, given a pair $(T,\ga)\in\mb{ShSYT}^\pm(\la)\times\mc_w$ such that $\ga\preceq c(T)$,
one can construct the marked shifted tableau backward.
\end{proof}

Now we are in the position to give our main result.
\begin{theorem}
The following shifted analog of diagram (\ref{dia}) holds:
\begin{equation}\label{dia'}
\raisebox{2em}{\xymatrix@H=1.5em{
\ms{Peak }\ar@(ur,ul)[rr]^-{\phi} \ar@{>->}[d]\ar@{>->}[r]& \ms{SPR}' \ar@{>>}[r]^-{\pi'}\ar@{>->}[d]&\Om\ar@{>->}[d]\\
\ms{NSym }\ar@{>->}[r]& \ms{PR}' \ar@{>>}[r]^-{\pi'}&\La
}},
\end{equation}
Dually, we have the following analogous diagram to (\ref{dia1}):
\begin{equation}\label{dia1'}
\raisebox{2em}{\xymatrix@H=1.5em{
{\La\,}\ar@{>->}[r]\ar@{>>}[d]^-\te&
\ms{PR }\ar@{>>}[d]\ar@{>>}[r]&\ms{QSym} \ar@{>>}[d]^-\vt\\{\Om\,\,}\ar@(dr,dl)[rr]
\ar@{>->}[r] & \ms{SPR} \ar@{>>}[r]&\ms{Peak}^*}},
\end{equation}

\medskip
\noindent
where all the objects in the two diagrams are Hopf algebras, except for the right coideal subalgebra $\ms{SPR}'$ of $\ms{PR}'$ and the right $\ms{PR}$-module coalgebra $\ms{SPR}$.
\end{theorem}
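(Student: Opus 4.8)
The plan is to realize diagram~(\ref{dia'}) as a lift of the already-established diagram~(\ref{dia}) along the inclusions $\ms{Peak}\hookrightarrow\ms{NSym}$, $\ms{SPR}'\hookrightarrow\ms{PR}'$ (the latter being the embedding~(\ref{ss})) and $\Om\hookrightarrow\La$, and then to obtain~(\ref{dia1'}) by dualizing. Since (\ref{dia}) already commutes and $\ms{SPR}'$ is a right coideal subalgebra of $\ms{PR}'$ by Theorem~\ref{spr}, the whole statement reduces to two identities on the distinguished bases, both fed by Lemma~\ref{des}, together with the identification of the dual maps.

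First I would treat the right-hand map. Restricting $\pi'$ to $\ms{SPR}'$ gives $\pi'(scl(T))=\sum_{P_{\bs{SW}}(w)=T}F_{c(w)}$. For a fixed $T$ of shape $\la$ the recording tableaux $Q_{\bs{SW}}(w)$ range bijectively over $\mb{ShSYT}^\pm(\la)$, and $c(w)=c(Q_{\bs{SW}}(w))$ by Lemma~\ref{des}(1); hence the sum equals $\sum_{S\in\mb{ShSYT}^\pm(\la)}F_{c(S)}=P_\la$ by~(\ref{pf}). Thus $\pi'$ carries $\ms{SPR}'$ onto $\Om$ with $\pi'(scl(T))=P_\la$, a value that coincides with $\pi'$ computed inside $\ms{PR}'$ through~(\ref{ss}); this makes the right square of~(\ref{dia'}) commute.

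Next the left-hand embedding is the restriction of the Hopf isomorphism $\eta$ to $\ms{Peak}\subset\ms{NSym}\subset\ms{MR}$. From $\eta(\Pi_P)=\sum_{\bs{Peak}(w)=P}w^{-1}$ and Lemma~\ref{des}(2), which yields $\bs{Peak}(w^{-1})=\bs{Peak}(P_{\bs{SW}}(w))$, one sees that $\eta(\Pi_P)$ collects precisely the permutations whose insertion tableau has peak set $P$, so $\eta(\Pi_P)=\sum_{\bs{Peak}(T)=P}scl(T)\in\ms{SPR}'$. Hence $\eta|_{\ms{Peak}}$ is an algebra embedding $\ms{Peak}\hookrightarrow\ms{SPR}'$; as both routes $\ms{Peak}\to\ms{PR}'$ are just $\eta$ followed by inclusions, the left square commutes, and the curved map $\phi$ of~(\ref{dia'}) is the restriction of the curved $\phi$ of~(\ref{dia}), using $\phi(\ms{Peak})\subseteq\Om$ from~(\ref{bil}). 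The structural claims (right coideal subalgebra, and the maps respecting the relevant structures) are inherited from Theorem~\ref{spr} and the corresponding facts for~(\ref{dia}).

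Finally I would obtain~(\ref{dia1'}) by transposing~(\ref{dia'}) under the pairing $[scl(T),\lan T'\ran]=\de_{T,T'}$ between $\ms{SPR}'$ and $\ms{SPR}$ and the Hopf pairings in~(\ref{bil}); functoriality of duality reverses the arrows and preserves commutativity, so the task is to name the transposes. A short computation gives that the dual of $\ms{Peak}\hookrightarrow\ms{SPR}'$ is $\ms{SPR}\twoheadrightarrow\ms{Peak}^*,\ \lan T\ran\mapsto K_{\bs{Peak}(T)}$, and that the dual of $\pi'\colon\ms{SPR}'\twoheadrightarrow\Om$ is $\Om\hookrightarrow\ms{SPR},\ Q_\la\mapsto\sum_{T\in\mb{ShSYT}(\la)}\lan T\ran$, the remaining transposes reproducing $\te$, $\vt$, $\pi'$ and the inclusions of~(\ref{dia1}). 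I would then confirm the factorization by checking that $[T]\mapsto F_{c(T)}\mapsto K_{\bs{Peak}(c(T))}=K_{\bs{Peak}(T)}$ along $\ms{PR}\twoheadrightarrow\ms{QSym}\xrightarrow{\vt}\ms{Peak}^*$ agrees with $[T]\mapsto\lan T\ran\mapsto K_{\bs{Peak}(T)}$. The main obstacle I anticipate is the peak identity $\eta(\Pi_P)=\sum_{\bs{Peak}(T)=P}scl(T)$ and, dually, verifying that its transpose is exactly $\lan T\ran\mapsto K_{\bs{Peak}(T)}$ compatibly with $\vt$; these rest on Lemma~\ref{des}(2) and the characterization~(\ref{kf}) of peak functions, after which the square commutativities and the remaining identifications are formal.
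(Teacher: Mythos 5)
Your proposal is correct and follows essentially the same route as the paper: the right square of (\ref{dia'}) via Lemma \ref{des}(1) together with (\ref{pf}), the left square via Lemma \ref{des}(2) giving $\eta\circ\iota(\Pi_P)=\sum_{\bs{Peak}(T)=P}scl(T)\in\ms{SPR}'$, and the dual diagram (\ref{dia1'}) by transposing under the perfect graded pairings, with the transposes identified exactly as the paper identifies its maps $i$ and $p$. The only cosmetic difference is that you obtain the bottom curved identity of (\ref{dia1'}) (that $p\circ i$ is the canonical inclusion $\Om\hookrightarrow\ms{Peak}^*$) formally from duality and (\ref{bil}), whereas the paper verifies it by citing Stembridge's expansion (\ref{QK}); the two are equivalent, your route in effect rederiving (\ref{QK}).
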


\begin{proof}
According to Lemma \ref{des} and formula (\ref{pf}), we know that
\beq\label{pro}
\pi'\lb scl(T)\rb=\sum_{P_{\ti{SW}}(w)=T}F_{c(w)}=
\sum_{P_{\ti{SW}}(w)=T}F_{c(Q_{\ti{SW}}(w))}
=\sum_{S\in\bs{ShSYT}^\pm(\la)}F_{c(S)}=P_\la
\eeq
for any $T\in\mb{ShSYT}(\la)$. That is, the projection $\pi'$ restricting on $\ms{SPR}'$ has the image $\Om$. On the other hand, for any peak set $P$ in $[2,n-1]$,
\[
\begin{split}
\eta\circ\iota(\Pi_P)&=\sum\limits_{\bs{Peak}(w)=P}w^{-1}
=\sum\limits_{\bs{Peak}(P_\ti{SW}(w^{-1}))=P}w^{-1}\\
&=\sum\limits_{T\in\ti{ShSYT}_n\atop\ti{Peak}(T)=P}
\lb\sum\limits_{P_{\ti{SW}}(w)=T}w\rb
=\sum\limits_{T\in\ti{ShSYT}_n\atop\ti{Peak}(T)=P}scl(T)\in\ms{SPR}'.
\end{split}
\]
Therefore, the embedding of $\ms{Peak}$ into $\ms{PR}'$ can factor through $\ms{SPR}'$.

In diagram (\ref{dia1'}), the embedding of $\Om$ into $\ms{SPR}$ maps Schur's Q-function $Q_\la$ to $\sum_{T\in\bs{ShSYT}(\la)}\lan T\ran$, while the projection of $\ms{SPR}$ onto $\ms{Peak}^*$ maps $\lan T\ran$ to $K_{\bs{Peak}(T)}$. In fact, denote the embedding by $i$ and the projection by $p$, then for any strict partition $\la$,
\[\begin{split}
i(Q_\la)&=\sum_{T\in\bs{ShSYT}}\lan i(Q_\la),scl(T)\ran\lan T\ran
=\sum_{T\in\bs{ShSYT}}[Q_\la,\pi'(scl(T))]\lan T\ran\\
&=\sum_{T\in\bs{ShSYT}}[Q_\la,P_{\bs{shape}(T)}]\lan T\ran
=\sum_{T\in\bs{ShSYT}(\la)}\lan T\ran.
\end{split}\]
On the other hand, note that for any $T\in\mb{SYT}$, $T=P(w(T))=Q(w(T)^{-1})$, thus \[\mb{Peak}(T)=\mb{Peak}(Q(w(T)^{-1}))=\mb{Peak}(w(T)^{-1})
=\mb{Peak}(P_{\bs{SW}}(w(T))\]
by Lemma \ref{des}, and the projection $\ms{PR}\twoheadrightarrow\ms{SPR}$ maps $[T]$ to $\lan P_{\bs{SW}}(w(T))\ran$ by definition. It induces the map $p:\ms{SPR}\rw\ms{Peak}^*$ such that $p(\lan S\ran)=K_{\bs{Peak}(S)}$ for any $S\in\mb{ShSYT}$. Moreover, $p\circ i$ is the identity map by the following expansion in \cite[(2.5)]{Ste}:
\beq\label{QK}Q_\la=\sum_{T\in\bs{ShSYT}(\la)}K_{\bs{Peak}(T)}.\eeq
\end{proof}

Combining (\ref{ss}) and (\ref{pro}), we have the well-known Schur P-positivity: For any strict $\la\vdash n$ and fixed $T\in\mb{ShSYT}(\la)$,
\beq\label{ps}P_\la=\pi'\lb scl(T)\rb=\sum\limits_U\pi'(cl(U))
=\sum\limits_{\mu\vdash n}|\{U\in\mb{SYT}(\mu)\,:\,P_{\bs{SW}}(w(U))=T\}|s_\mu.\eeq
Using the Robinson-Schensted correspondence and its symmetry property, we see that the set $\{U\in\mb{SYT}(\mu)\,:\,P_{\ti{SW}}(w(U))=T\}$ bijectively corresponds to
\beq\label{set}
\fs(T,V):=\{w\in\fs\,:\,P_{\ti{SW}}(w)=T,\,P(w^{-1})=V\}\eeq
for any $V\in\mb{SYT}(\mu)$. Formula (\ref{ps}) means that the cardinality of $\fs(T,V)$ only depends on the shapes of $T$ and $V$.

On the other hand, applying $\pi'$ to the multiplication formula (\ref{pr'm}), we get the shifted Littlewood-Richardson rule (See also \cite[Cor. 1.15]{Ser}): fix any $T\in\mb{ShSYT}(\mu)$, then
\[P_\la P_\mu=\sum_\nu b_{\la,\,\mu}^\nu P_\nu,\]
where $b_{\la,\,\mu}^\nu=|\{S\in\mb{ShSYT}(\nu/\la)\,:\, \mb{rect}(S)=T\}|$.

Moreover, we have
\begin{cor}
For any peak set $P$ in $[n-1]$ and $\la\vdash n$,
\begin{align}
&\label{pi}\phi(\Pi_P)=\sum\limits_{T\in\ti{ShSYT}_n\atop\ti{Peak}(T)=P}
\pi'(scl(T))=\sum\limits_{\la\vdash n\ti{ strict}}
|\{T\in\mb{ShSYT}(\la)\,:\,\mb{Peak}(T)=P\}|P_\la,\\
&\label{SK}S_\la=\te(s_\la)=\sum_{T\in\bs{SYT}(\la)}K_{\bs{Peak}(T)}.
\end{align}
\end{cor}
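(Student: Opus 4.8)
The plan is to prove the two formulas in the Corollary by transporting the main diagram's maps through the appropriate Hopf-algebra dualities. For formula~(\ref{pi}), I would start from the commutative square relating $\ms{Peak}$, $\ms{SPR}'$, and $\Om$ in diagram~(\ref{dia'}). The map $\phi:\ms{Peak}\rw\Om$ is the restriction of the forgetful map, and the diagram tells us it factors as $\pi'$ composed with the embedding $\ms{Peak}\hookrightarrow\ms{SPR}'$. In the proof of the main theorem it was shown that this embedding sends $\Pi_P$ to $\sum_{T\in\ti{ShSYT}_n,\,\ti{Peak}(T)=P}scl(T)$. Applying $\pi'$ termwise and invoking formula~(\ref{pro}), which gives $\pi'(scl(T))=P_{\ti{shape}(T)}$, I obtain $\phi(\Pi_P)=\sum_{T}P_{\ti{shape}(T)}$ where $T$ ranges over standard shifted tableaux of size $n$ with peak set $P$. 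Collecting terms by shape $\la$ then yields the stated multiplicity expansion. Both equalities in~(\ref{pi}) follow immediately; the first is the factorization through $\ms{SPR}'$ and the second is the bookkeeping by shape.

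For formula~(\ref{SK}), I would work in the dual diagram~(\ref{dia1'}) and track the modified Schur function $S_\la=\te(s_\la)$. The strategy is to compute the image of $s_\la$ under the composite $\ms{QSym}$-side maps in two equivalent ways. Starting from the expansion $s_\la=\sum_{T\in\bs{SYT}(\la)}F_{c(T)}$ recalled in the preliminaries, I would apply the descent-to-peak map $\vt:\ms{QSym}\rw\ms{Peak}^*$, which sends $F_\al\mapsto K_{\bs{Peak}(\al)}$. This gives $\vt(s_\la)=\sum_{T\in\bs{SYT}(\la)}K_{\bs{Peak}(c(T))}=\sum_{T\in\bs{SYT}(\la)}K_{\bs{Peak}(T)}$, using that $\mb{Peak}(c(T))=\mb{Peak}(T)$ by definition. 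The remaining point is to identify $\vt(s_\la)$ with $\te(s_\la)=S_\la$ as an element of $\Om\subseteq\ms{Peak}^*$. This is exactly the commutativity of the right-hand square in the second diagram of the preliminaries, namely $\vt\circ(\text{inclusion }\La\hookrightarrow\ms{QSym})=(\text{inclusion }\Om\hookrightarrow\ms{Peak}^*)\circ\te$, which the paper states holds. Chasing $s_\la$ around this square gives $S_\la=\te(s_\la)=\vt(s_\la)=\sum_{T\in\bs{SYT}(\la)}K_{\bs{Peak}(T)}$.

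The main obstacle, and the place requiring the most care, is ensuring that the peak statistic is computed consistently across the two worlds (permutations versus tableaux) so that the descent-to-peak passage is legitimate. Concretely, in formula~(\ref{SK}) I must justify that $\mb{Peak}(T)=\mb{Peak}(c(T))$ for a standard (unshifted) Young tableau $T$, which is a matter of the definition of $c(T)$ together with the definition $\mb{Peak}(T):=\mb{Peak}(\mb{Des}(T))$; and in formula~(\ref{pi}) the analogous consistency is already handled by formula~(\ref{pro}), which rests on Lemma~\ref{des}. The compatibility $\vt\circ\iota_\La=\iota_\Om\circ\te$ is the genuine structural input and is supplied by the commutative diagram in the preliminaries, so no independent verification is needed here. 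Beyond these identifications, the argument is a clean diagram chase with no heavy computation, and the two formulas emerge directly once the maps are correctly composed.
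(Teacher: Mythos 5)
Your proposal is correct and takes essentially the same route as the paper: the Corollary is presented there as a direct consequence of the main theorem's diagrams, with (\ref{pi}) obtained exactly as you do --- composing the embedding $\Pi_P\mapsto\sum_{T\in\ti{ShSYT}_n,\,\ti{Peak}(T)=P}scl(T)$ (computed in the theorem's proof via Lemma \ref{des}) with $\pi'$ and formula (\ref{pro}), then collecting terms by shape --- and (\ref{SK}) obtained by the same chase of $s_\la=\sum_{T\in\bs{SYT}(\la)}F_{c(T)}$ through $\vt$ using the preliminary identification of $\vt$ restricted to $\La$ with $\te$. The only cosmetic difference is that the paper additionally remarks that (\ref{pi}) is equivalent to Stembridge's expansion (\ref{QK}) via the pairing $[\phi(\Pi_P),Q_\la]=[\Pi_P,Q_\la]$, an observation your argument does not need.
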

Note that formula (\ref{pi}) is equivalent to (\ref{QK}). Indeed,
\[[\Pi_P,Q_\la]=[\phi(\Pi_P),Q_\la]=|\{T\in\mb{ShSYT}(\la)\,:\,\mb{Peak}(T)=P\}|.\]
In particular, let $\emptyset_n:=\emptyset$ be the empty peak set in $[n-1]$, then $\phi(\Pi_{\emptyset_n})=P_{(n)}$. In fact, $\xy 0;/r.15pc/:
(0,2.5)*{};(20,2.5)*{}**\dir{-};
(0,-2.5)*{};(20,-2.5)*{}**\dir{-};
(0,2.5)*{};(0,-2.5)*{}**\dir{-};
(5,2.5)*{};(5,-2.5)*{}**\dir{-};
(15,2.5)*{};(15,-2.5)*{}**\dir{-};
(20,2.5)*{};(20,-2.5)*{}**\dir{-};
(2.5,0)*{\stt{1}};(10,0)*{\dots};
(17.5,0)*{\stt{n}};
\endxy$ is the unique standard shifted tableau with empty peak set.
It can be seen by Lemma \ref{des}, (2), as any permutation $w\in\fs_n$ satisfies $w_1^{-1}>\cdots>w_i^{-1}<\cdots<w_n^{-1}$ if $\mb{Peak}(w^{-1})=\emptyset_n$, thus $P_{\bs{SW}}(w)$ is of that form.

Combining formulas (\ref{kf}), (\ref{pf}) and (\ref{QK}), we have the following identity:
\[|\{S\in\mb{ShSYT}^\pm(\la)\,:\,\mb{Des}(S)=D\}|
=\sum_{T\in\ti{ShSYT}(\la)\atop\ti{Peak}(T)\subseteq
D\triangle(D+1)}2^{|\ti{Peak}(T)|+1-\ell(\la)}\]
for any strict partition $\la$ of $n$ and $D\subseteq[n-1]$. Now we give a stronger result.
\begin{prop}\label{dp}
Given $T\in\mb{ShSTY}(\la)$ for a strict partition $\la$ of $n$ and $D\subseteq[n-1]$ such that $\mb{Peak}(T)\subseteq
D\triangle(D+1)$, then
\[|\{S\in\mb{ShSYT}^\pm(\la)\,:\,\mb{Des}(S)=D,|S|=T\}|
=2^{|\ti{Peak}(T)|+1-\ell(\la)}.\]
\end{prop}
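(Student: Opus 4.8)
The plan is to fix the underlying standard tableau $T$ and describe every marked-standard $S$ with $|S|=T$ purely by which entries are primed. First I would note that, because $T$ is standard (each of $1,\dots,n$ occurs once, rows and columns strictly increasing), priming \emph{any} set of off-diagonal entries again yields a valid element of $\mb{ShSYT}^\pm(\la)$: in the order $1'<1<2'<2<\cdots$ one has $a<b\Rightarrow a\le b-1<b'$, so each of $a',a$ stays strictly below each of $b',b$, and the weakly-increasing row/column conditions survive any priming. The only genuine restriction is that the $\ell(\la)$ diagonal entries cannot be primed. Hence $\{S:|S|=T\}$ is in bijection with vectors $(p_1,\dots,p_n)\in\{0,1\}^n$, where $p_k=1$ means ``$k$ is primed,'' subject to $p_k=0$ whenever the value $k$ lies on the main diagonal.

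Next I would read off $\mb{Des}(S)$ from $(p_k)$ and the geometry of $T$. Writing $r_k$ for the row of the cell of $T$ holding $k$, a direct case check against the definition of $\mb{Des}$ gives, for each $i\in[n-1]$,
\[
i\in\mb{Des}(S)\iff
\begin{cases}
p_i=0,& r_i<r_{i+1},\\
p_{i+1}=1,& r_i\ge r_{i+1}.
\end{cases}
\]
Taking all $p_k=0$ recovers $\mb{Des}(T)=\{i:r_i<r_{i+1}\}$, whose peaks are $\mb{Peak}(T)$. The requirement $\mb{Des}(S)=D$ thus becomes a system in which each position $i$ pins exactly one coordinate $p_{v(i)}$, with $v(i)=i$ when $r_i<r_{i+1}$ and $v(i)=i+1$ otherwise, to the value read off from $D$ (namely $p_i=1-[i\in D]$, resp.\ $p_{i+1}=[i\in D]$).

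The core of the count is the bookkeeping of $v:[n-1]\to[n]$. Since $v(i)\in\{i,i+1\}$, two positions collide exactly when $v(j-1)=v(j)=j$, i.e.\ when $j-1\notin\mb{Des}(T)$ and $j\in\mb{Des}(T)$ — precisely the peaks $j\in\mb{Peak}(T)$. Therefore $|\mathrm{im}\,v|=(n-1)-|\mb{Peak}(T)|$, leaving exactly $|\mb{Peak}(T)|+1$ coordinates undetermined by descent constraints. At a colliding (peak) position the two demands are $p_j=[j-1\in D]$ and $p_j=1-[j\in D]$, which agree iff $[j-1\in D]\neq[j\in D]$, i.e.\ iff $j\in D\triangle(D+1)$; the hypothesis $\mb{Peak}(T)\subseteq D\triangle(D+1)$ is exactly what makes every such pair consistent, so the determined coordinates admit a common (and unique) value.

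Finally I would show that the diagonal coordinates, forced to $0$, are all among the undetermined ones, so imposing them merely kills $\ell(\la)$ free directions. For a diagonal entry $j$ in row $r$, building $T$ cell-by-cell in increasing order of entries shows that just before $j$ is placed the shape has $r-1$ rows, so $r_{j-1}<r$ and position $j-1$ pins $p_{j-1}$, not $p_j$; and immediately after placing $j$ the new row $r$ has length $1$, so $j+1$ cannot open row $r+1$ and must satisfy $r_{j+1}\le r$, whence position $j$ pins $p_{j+1}$, not $p_j$ (with the obvious modifications at $j=1$ and $j=n$). Thus no descent constraint touches a diagonal coordinate, the determined coordinates are all off-diagonal (hence never in conflict with the diagonal constraint), and the peak consistencies hold; the solutions are counted by the free choices on the $(|\mb{Peak}(T)|+1)-\ell(\la)$ undetermined non-diagonal coordinates, giving $2^{|\mb{Peak}(T)|+1-\ell(\la)}$. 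I expect the main obstacle to be this last diagonal claim: making the ``new row has length one'' growth argument precise so that every diagonal entry is certified free. The descent rule is a finite case check and the peak/collision correspondence is then pure bookkeeping; as a bonus the argument forces $\ell(\la)\le|\mb{Peak}(T)|+1$, so the exponent is always nonnegative.
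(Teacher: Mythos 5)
Your proof is correct and takes essentially the same route as the paper's: both arguments reduce to determining, for fixed $T$, which off-diagonal primes are forced by the requirement $\mb{Des}(S)=D$ and which can be toggled freely, and both rest on the same key characterization (the prime on entry $i$ is free exactly when $i-1\in\mb{Des}(T)$ and $i\notin\mb{Des}(T)$, with the boundary cases $i=1,n$), which is the paper's reading-word condition in disguise. The only differences are organizational: the paper counts the free entries interval-by-interval between consecutive diagonal entries, while you count pinned coordinates via collisions of your map $v$ at peaks, and your version has the merit of making explicit the consistency/existence step (that the hypothesis $\mb{Peak}(T)\subseteq D\triangle(D+1)$ is exactly what resolves the collisions) which the paper dismisses as easy.
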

\begin{proof}
Here we give a bijective proof. Given a marked-standard shifted tableau $S$, first note that $\mb{Peak}(|S|)\subseteq\mb{Des}(S)\triangle(\mb{Des}(S)+1)$ (see \cite[Prop. 3.4]{As}). Indeed, $i\in\mb{Peak}(|S|)$ if and only if $i-1,i,i+1$ occur in $w(|S|)$ in the order $i-1,i+1,i$ or $i+1,i-1,i$. If such $i$ is primed in $S$, then $i-1\in\mb{Des}(S)$ and $i\notin\mb{Des}(S)$. If such $i$ is unprimed in $S$, then $i-1\notin\mb{Des}(S)$ and $i\in\mb{Des}(S)$.

Conversely, for any $T\in\mb{ShSYT}(\la)$ such that $\mb{Peak}(T)\subseteq D\triangle(D+1)$, it is easy to construct an $S\in\mb{ShSYT}^\pm(\la)$ satisfying $\mb{Des}(S)=D$ and $|S|=T$.
We only need to prove that there exist exactly $2^{|\ti{Peak}(T)|+1-\ell(\la)}$ such marked-standard shifted tableaux $S$'s. Suppose that $1=i_1<\cdots<i_k\leq n,\,k=\ell(\la)$, be the entries on the main diagonal of $T$, then it is easy to see that for any $j=1,\cdots,k-1$, $p_j:=|\mb{Peak}(T)\cap[i_j,i_{j+1}]|\geq1$, thus $|\mb{Peak}(T)|+1-\ell(\la)\geq0$. On the other hand, for any $S\in\mb{ShSYT}^\pm(\la)$ and an entry $i$ off the main diagonal of $S$, the change of the marking of $i$ does not affect the descent set of $S$ if and only if one of the following two cases holds:

(1) $1<i<n$ and $i-1,i,i+1$ occur in $w(|S|)$ in the order $i,i+1,i-1$ or $i,i-1,i+1$.

(2) $i=n$ and $n-1,n$ occur in $w(|S|)$ in the order $n,n-1$.

In order to get the desired result, one only needs to see that for any $j=1,\cdots,k-1$, there exist exactly $p_j-1$ entries in $[i_j,i_{j+1}]$ whose markings do not affect the descent set of $S$. If let $p_k:=|\mb{Peak}(T)\cap[i_k,n]|$, then there exist exactly $p_k$ such entries in $[i_k,n]$.
\end{proof}

\begin{exam}
For $\la=432,\,D=\{2,3,5,8\}$ and  $T=\raisebox{1.6em}{\xy 0;/r.16pc/:
(0,0)*{};(20,0)*{}**\dir{-};
(0,-5)*{};(20,-5)*{}**\dir{-};
(5,-10)*{};(20,-10)*{}**\dir{-};
(10,-15)*{};(20,-15)*{}**\dir{-};
(0,0)*{};(0,-5)*{}**\dir{-};
(5,0)*{};(5,-10)*{}**\dir{-};
(10,0)*{};(10,-15)*{}**\dir{-};
(15,0)*{};(15,-15)*{}**\dir{-};
(20,0)*{};(20,-15)*{}**\dir{-};
(2.5,-2.5)*{\stt{1}};(7.5,-2.5)*{\stt{2}};(12.5,-2.5)*{\stt{4}};
(17.5,-2.5)*{\stt{6}};(7.5,-7.5)*{\stt{3}};(12.5,-7.5)*{\stt{5}};
(17.5,-7.5)*{\stt{8}};(12.5,-12.5)*{\stt{7}};(17.5,-12.5)*{\stt{9}};
\endxy}~$, then $\mb{Peak}(T)=\{2,4,6,8\}$, and we have those $S\in\mb{ShSYT}^\pm(\la)$ satisfying $\mb{Des}(S)=D$ and $|S|=T$ as follows:
\[\raisebox{1.6em}{\xy 0;/r.16pc/:
(0,0)*{};(20,0)*{}**\dir{-};
(0,-5)*{};(20,-5)*{}**\dir{-};
(5,-10)*{};(20,-10)*{}**\dir{-};
(10,-15)*{};(20,-15)*{}**\dir{-};
(0,0)*{};(0,-5)*{}**\dir{-};
(5,0)*{};(5,-10)*{}**\dir{-};
(10,0)*{};(10,-15)*{}**\dir{-};
(15,0)*{};(15,-15)*{}**\dir{-};
(20,0)*{};(20,-15)*{}**\dir{-};
(2.5,-2.5)*{\stt{1}};(7.5,-2.5)*{\stt{2}};(12.5,-2.5)*{\stt{4'}};
(17.5,-2.5)*{\stt{6'}};(7.5,-7.5)*{\stt{3}};(12.5,-7.5)*{\stt{5}};
(17.5,-7.5)*{\stt{8}};(12.5,-12.5)*{\stt{7}};(17.5,-12.5)*{\stt{9}};
\endxy}~,\,
\raisebox{1.6em}{\xy 0;/r.16pc/:
(0,0)*{};(20,0)*{}**\dir{-};
(0,-5)*{};(20,-5)*{}**\dir{-};
(5,-10)*{};(20,-10)*{}**\dir{-};
(10,-15)*{};(20,-15)*{}**\dir{-};
(0,0)*{};(0,-5)*{}**\dir{-};
(5,0)*{};(5,-10)*{}**\dir{-};
(10,0)*{};(10,-15)*{}**\dir{-};
(15,0)*{};(15,-15)*{}**\dir{-};
(20,0)*{};(20,-15)*{}**\dir{-};
(2.5,-2.5)*{\stt{1}};(7.5,-2.5)*{\stt{2}};(12.5,-2.5)*{\stt{4'}};
(17.5,-2.5)*{\stt{6'}};(7.5,-7.5)*{\stt{3}};(12.5,-7.5)*{\stt{5'}};
(17.5,-7.5)*{\stt{8}};(12.5,-12.5)*{\stt{7}};(17.5,-12.5)*{\stt{9}};
\endxy}~,\,
\raisebox{1.6em}{\xy 0;/r.16pc/:
(0,0)*{};(20,0)*{}**\dir{-};
(0,-5)*{};(20,-5)*{}**\dir{-};
(5,-10)*{};(20,-10)*{}**\dir{-};
(10,-15)*{};(20,-15)*{}**\dir{-};
(0,0)*{};(0,-5)*{}**\dir{-};
(5,0)*{};(5,-10)*{}**\dir{-};
(10,0)*{};(10,-15)*{}**\dir{-};
(15,0)*{};(15,-15)*{}**\dir{-};
(20,0)*{};(20,-15)*{}**\dir{-};
(2.5,-2.5)*{\stt{1}};(7.5,-2.5)*{\stt{2}};(12.5,-2.5)*{\stt{4'}};
(17.5,-2.5)*{\stt{6'}};(7.5,-7.5)*{\stt{3}};(12.5,-7.5)*{\stt{5}};
(17.5,-7.5)*{\stt{8}};(12.5,-12.5)*{\stt{7}};(17.5,-12.5)*{\stt{9'}};
\endxy}~,\,
\raisebox{1.6em}{\xy 0;/r.16pc/:
(0,0)*{};(20,0)*{}**\dir{-};
(0,-5)*{};(20,-5)*{}**\dir{-};
(5,-10)*{};(20,-10)*{}**\dir{-};
(10,-15)*{};(20,-15)*{}**\dir{-};
(0,0)*{};(0,-5)*{}**\dir{-};
(5,0)*{};(5,-10)*{}**\dir{-};
(10,0)*{};(10,-15)*{}**\dir{-};
(15,0)*{};(15,-15)*{}**\dir{-};
(20,0)*{};(20,-15)*{}**\dir{-};
(2.5,-2.5)*{\stt{1}};(7.5,-2.5)*{\stt{2}};(12.5,-2.5)*{\stt{4'}};
(17.5,-2.5)*{\stt{6'}};(7.5,-7.5)*{\stt{3}};(12.5,-7.5)*{\stt{5'}};
(17.5,-7.5)*{\stt{8}};(12.5,-12.5)*{\stt{7}};(17.5,-12.5)*{\stt{9'}};
\endxy}~.\]
\end{exam}

By Prop. \ref{dp}, we also have the following formula translating the result in \cite[Th. 3.6]{Ste}:
\begin{cor}
For any $T\in\mb{ShSYT}(\la)$,
\beq\label{kf1}
K_{\bs{Peak}(T)}=2^{\ell(\la)}
\sum_{S\in\ti{ShSYT}^\pm(\la)\atop|S|=T}F_{c(S)}.\eeq
\end{cor}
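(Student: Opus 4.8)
The plan is to start from the right-hand side of (\ref{kf1}) and reduce it directly to the expansion (\ref{kf}) of $K_P$ with $P=\mb{Peak}(T)$, using Proposition \ref{dp} as the essential input. First I would group the sum $\sum_{S}F_{c(S)}$ according to the descent set $D:=\mb{Des}(S)$. Since by definition $c(S)$ is the composition associated with $\mb{Des}(S)$, the summand $F_{c(S)}$ depends only on $D$; writing $\al_D$ for the unique composition of $n$ with $D(\al_D)=D$, the right-hand side of (\ref{kf1}) becomes
\[2^{\ell(\la)}\sum_{D\subseteq[n-1]}\bigl|\{S\in\mb{ShSYT}^\pm(\la):|S|=T,\,\mb{Des}(S)=D\}\bigr|\,F_{\al_D}.\]

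Next I would evaluate each cardinality via Proposition \ref{dp}. The necessity direction established inside the proof of that proposition shows $\mb{Peak}(|S|)\subseteq\mb{Des}(S)\triangle(\mb{Des}(S)+1)$ for every marked-standard shifted tableau $S$; hence whenever $\mb{Peak}(T)\not\subseteq D\triangle(D+1)$ there is no admissible $S$ and the corresponding term drops out. For the remaining $D$ satisfying $\mb{Peak}(T)\subseteq D\triangle(D+1)$, Proposition \ref{dp} supplies the count $2^{|\mb{Peak}(T)|+1-\ell(\la)}$. Substituting, the prefactor $2^{\ell(\la)}$ cancels the $2^{-\ell(\la)}$, leaving
\[2^{|\mb{Peak}(T)|+1}\sum_{D\subseteq[n-1]\atop\mb{Peak}(T)\subseteq D\triangle(D+1)}F_{\al_D}.\]

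Finally I would invoke the standard bijection $D\mapsto\al_D$ between subsets of $[n-1]$ and compositions of $n$ to reindex this as a sum over $\al\vDash n$ with $\mb{Peak}(T)\subseteq D(\al)\triangle(D(\al)+1)$, which is exactly the right-hand side of (\ref{kf}) with $P=\mb{Peak}(T)$. This identifies the expression with $K_{\bs{Peak}(T)}$ and finishes the proof. As for where the difficulty lies: all the genuine combinatorial content, namely the exact number of markings compatible with a prescribed descent set, has already been isolated in Proposition \ref{dp}, so the remaining argument is bookkeeping. The only step needing a little care is the vanishing of the terms with $\mb{Peak}(T)\not\subseteq D\triangle(D+1)$, which one must read off from the containment $\mb{Peak}(|S|)\subseteq\mb{Des}(S)\triangle(\mb{Des}(S)+1)$ proved en route to Proposition \ref{dp}; everything else is a matching of indices and powers of two against (\ref{kf}).
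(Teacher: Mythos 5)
Your proof is correct and matches the paper's intended argument: the paper derives this corollary precisely by combining Proposition \ref{dp} (including the containment $\mb{Peak}(|S|)\subseteq\mb{Des}(S)\triangle(\mb{Des}(S)+1)$ noted at the start of its proof) with the expansion (\ref{kf}) of $K_P$ in the fundamental basis. Your write-up simply makes explicit the bookkeeping---grouping by descent set, discarding the empty terms, and cancelling the powers of two---that the paper leaves implicit.
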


\begin{rem}
From \cite[Ch. III, \S 8, Ex. 7]{Mac}, we know that the modified Schur function
\[S_\la=\sum_{T\in\bs{SSYT}^\pm(\la)}x^{\bs{wt}(T)},\]
where $\mb{SSYT}^\pm(\la)$ is the set of (unshifted) marked tableaux of shape $\la$ restricted by the first three conditions for $\mb{ShSSYT}^\pm$. If we further define the (unshifted) marked-standard tableaux, denoted by $\mb{SYT}^\pm$, and their descent sets, then we have
\[S_\la=\sum_{T\in\bs{SYT}^\pm(\la)}F_{c(T)},\]
analogous to Prop. \ref{bij}. Similarly, one can also get the unshifted counterpart of Prop. \ref{dp}: Given $T\in\mb{SYT}(\la)$ for one $\la\vdash n$ and $D\subseteq[n-1]$ such that $\mb{Peak}(T)\subseteq D\triangle(D+1)$, we have
\[|\{S\in\mb{SYT}^\pm(\la)\,:\,\mb{Des}(S)=D,|S|=T\}|
=2^{|\ti{Peak}(T)|+1}.\]
\end{rem}

Finally, we raise some further questions to consider.

(i) We wonder whether there exists an algebra epimorphism $\ms{PR}'\twoheadrightarrow \ms{SPR}'$ extending $\Te$ and lifting $\te$, which means the following commutative diagram
\begin{equation}\label{dia'}
\raisebox{2em}{\xymatrix@H=1.5em{
\ms{NSym }\ar@{>->}[r]\ar@{>>}[d]^-{\Te}& \ms{PR}' \ar@{>>}[r]^-{\pi'}\ar@{.>>}[d]^-?&\La\ar@{>>}[d]^-\te\\
\ms{Peak }\ar@{>->}[r]& \ms{SPR}' \ar@{>>}[r]^-{\pi'}&\Om
}}
\end{equation}
or dually if there exists a coalgebra monomorphism $\ms{SPR}\rightarrowtail\ms{PR}$ such that
\begin{equation}\label{dia1'}
\raisebox{2em}{\xymatrix@H=1.5em{
{\Om\,\,}\ar@{>->}[r]\ar@{>->}[d]& \ms{SPR}\ar@{>>}[r]\ar@{>.>}[d]^-?&\ms{Peak}^* \ar@{>->}[d]\\
{\La\,}\ar@{>->}[r]&
\ms{PR }\ar@{>>}[r]&\ms{QSym}
}}.
\end{equation}

In fact, we find one candidate as follows. For any strict $\la$, let $T_\la$ be the unique shifted standard tableau with $\la_{i-1}+1,\la_{i-1}+2,\dots,\la_i$ lying in the $i$th row for $i=1,\dots,\ell(\la)$. Define
\[j:\ms{SPR}\rw\ms{PR},\,\lan T\ran\mapsto2^{\ell(\la)}\sum_{w\in\fs(T)}[P(w)]\]
for any $T\in\mb{ShSYT}(\la)$, where $\fs(T):=\{w\in\fs\,:\,P_\bs{SW}(w^{-1})=T_\la,\,
\left|Q_\bs{SW}(w^{-1})\right|=T\}$. Note that $P(w)$ is only determined by $Q_\bs{SW}(w^{-1})=P_\bs{mix}(w)$
according to Remark \ref{rem}, thus $T_\la$
%in the above definition
can be replaced by any fixed $T'\in\mb{ShSYT}(\la)$ to give the same map $j$. Also, $|\fs(T)|=2^{|\la|-\ell(\la)}$.

From (\ref{ps}), we know that the embedding $\Om\rw\La\rw\ms{PR}$ maps $Q_\la$ to
\[\begin{split}
&2^{\ell(\la)}\sum\limits_\mu
|\{U\in\mb{SYT}(\mu)\,:\,P_{\ti{SW}}(w(U))=T_\la\}|\lb
\sum_{T\in\bs{SYT}(\mu)}[T]\rb\\
&=2^{\ell(\la)}\sum\limits_{w\in\fs\,:\,P_{\ti{SW}}(w(Q(w)))=T_\la}[P(w)]
=2^{\ell(\la)}\sum\limits_{w\in\fs\,:\,P_{\ti{SW}}(w^{-1})=T_\la}[P(w)]
\end{split}\]
On the other hand,
\begin{align*}
j\lb\sum\nolimits_{T\in\bs{ShSYT}(\la)}\lan T\ran\rb &=
2^{\ell(\la)}\sum_{T\in\bs{ShSYT}(\la)}\sum_{w\in\fs(T)}[P(w)]\\
&=2^{\ell(\la)}\sum\limits_{w\in\fs\,:\,P_{\ti{SW}}(w^{-1})=T_\la}[P(w)].
\end{align*}
This is the left commutative square in diagram (\ref{dia1'}).

Meanwhile, the image of $\lan T\ran,\,T\in\mb{ShSYT}(\la)$, under the map
$\ms{SPR}\rw\ms{PR}\rw\ms{QSym}$ is
\[
2^{\ell(\la)}\sum_{w\in\fs(T)}F_{c(P(w))}
=2^{\ell(\la)}\sum_{w\in\fs(T)}F_{c(Q_{\ti{SW}}(w^{-1}))}
=2^{\ell(\la)}\sum_{S\in\ti{ShSYT}^\pm(\la)\atop|S|=T}
F_{c(S)}=K_{\bs{Peak}(T)}
\]
by formula (\ref{kf1}), thus it coincides with that of $\ms{SPR}\rw\ms{Peak}^*\rw\ms{QSym}$, and the right commutative square in diagram (\ref{dia1'}) also holds.

Dually, we define
\[\Xi:\ms{PR}'\rw\ms{SPR}',\,cl(U)\mapsto
\sum_{\la\bs{ strict}} 2^{\ell(\la)}\sum_{w\in\fs(\la,U)}scl\lb\left|
Q_\bs{SW}(w)\right|\rb\]
for any standard tableau $U$, where $\fs(\la,U):=\fs(T_\la,U)$ defined in (\ref{set}). It makes
\[\lan \lan T\ran,\Xi(cl(U))\ran=\lan j(\lan T\ran),cl(U)\ran.\]

The image of $R_\al,\,\al\vDash n$, under the map
$\ms{NSym}\stackrel{\Te}{\rw}\ms{Peak}\rw\ms{SPR}'$ is
\[\sum_{P\ti{ peak set in } [n-1]
\atop P\subseteq D(\al)\triangle(D(\al)+1)}2^{|P|+1}\lb
\sum_{T\in\ti{ShSYT}_n\atop\ti{Peak}(T)=P}scl(T)\rb
=\sum_{T\in\ti{ShSYT}_n
\atop \ti{Peak}(T)\subseteq D(\al)\triangle(D(\al)+1)}2^{|\ti{Peak}(T)|+1}scl(T).\]
On the other hand, its image under the map
$\ms{NSym}\rw\ms{PR}'\stackrel{\Xi}{\rw}\ms{SPR}'$ is
\[\begin{split}
\Xi\lb\sum_{c(U)=\al}cl(U)\rb&=\sum_{\la\bs{ strict}} 2^{\ell(\la)}\sum_{c(U)=\al,\,w\in\fs(\la,U)}scl\lb\left|
Q_\bs{SW}(w)\right|\rb\\
&=\sum_{\la\bs{ strict}} 2^{\ell(\la)}\sum_{w\in\fs(\la,\cdot),\,c(w)=\al}scl\lb\left|
Q_\bs{SW}(w)\right|\rb\\
&=\sum_{\la\bs{ strict}} 2^{\ell(\la)}\sum_{S\in\ti{ShSYT}^\pm(\la)
\atop c(S)=\al}scl(|S|).
\end{split}\]
Hence, the left commutative square in diagram (\ref{dia'}) holds by Prop. \ref{dp}. Finally, note that
\[\te\circ\pi'(cl(U))=\te(s_\la)=S_\la\]
for any $U\in\mb{SYT}(\la)$. Meanwhile,
\begin{align*}
&\pi'\circ\Xi(cl(U))=\sum_{\mu\bs{ strict}} 2^{\ell(\mu)}\sum_{w\in\fs(\mu,U)}\pi'\lb scl\lb\left|
Q_\bs{SW}(w)\right|\rb\rb\\
&=\sum_{\mu\bs{ strict}}|\fs(\mu,U)|Q_\mu
=\sum_{\mu\bs{ strict}}|\fs(\mu,U)|\lb\sum_{T\in\bs{ShSYT}(\mu)}K_{\bs{Peak}(T)}\rb\\
&=\sum_{\mu\bs{ strict}}\lb\sum_{T\in\bs{ShSYT}(\mu)}|\fs(T,U)|K_{\bs{Peak}(T)}\rb
=\sum_{T\in\bs{ShSYT}}\lb\sum_{w\in\fs(T,U)}K_{\bs{Peak}(w^{-1})}\rb\\
&=\sum_{P(w^{-1})=U}K_{\bs{Peak}(Q(w^{-1}))}
=\sum_{V\in\bs{SYT}(\la)}K_{\bs{Peak}(V)}
=S_\la
\end{align*}
by formula (\ref{SK}), thus the right commutative square in diagram (\ref{dia'}) also holds. In particular, we get a somewhat new expansion formula
\beq\label{SQ}
S_\la=\sum_{\mu\bs{ strict}}|\fs(\mu,U)|Q_\mu\eeq
for any fixed $U\in\mb{SYT}(\la)$. In the end, one should see that all the adjoint pairs in (\ref{bil}) have been lifted onto the level of (shifted) PR algebras. Unfortunately, we find that $j$ \textit{fails} to be a coalgebra homomorphism, and thus $\Xi$ is also not an algebra homomorphism. So far the authors have no idea how to modify the definition of $j$ to rescue the situation.

One simple counterexample is as follows.
\begin{exam}
For $T=\raisebox{1.6em}{\xy 0;/r.16pc/:
(0,0)*{};(20,0)*{}**\dir{-};
(0,-5)*{};(20,-5)*{}**\dir{-};
(5,-10)*{};(15,-10)*{}**\dir{-};
(10,-15)*{};(15,-15)*{}**\dir{-};
(0,0)*{};(0,-5)*{}**\dir{-};
(5,0)*{};(5,-10)*{}**\dir{-};
(10,0)*{};(10,-15)*{}**\dir{-};
(15,0)*{};(15,-15)*{}**\dir{-};
(20,0)*{};(20,-5)*{}**\dir{-};
(2.5,-2.5)*{\stt{1}};(7.5,-2.5)*{\stt{2}};(12.5,-2.5)*{\stt{3}};
(17.5,-2.5)*{\stt{6}};
(7.5,-7.5)*{\stt{4}};(12.5,-7.5)*{\stt{5}};(12.5,-12.5)*{\stt{7}};
\endxy}~$, one can check that
\[P_{\bs{SW}}(5172364)=\raisebox{1.6em}{\xy 0;/r.16pc/:
(0,0)*{};(20,0)*{}**\dir{-};
(0,-5)*{};(20,-5)*{}**\dir{-};
(5,-10)*{};(15,-10)*{}**\dir{-};
(10,-15)*{};(15,-15)*{}**\dir{-};
(0,0)*{};(0,-5)*{}**\dir{-};
(5,0)*{};(5,-10)*{}**\dir{-};
(10,0)*{};(10,-15)*{}**\dir{-};
(15,0)*{};(15,-15)*{}**\dir{-};
(20,0)*{};(20,-5)*{}**\dir{-};
(2.5,-2.5)*{\stt{1}};(7.5,-2.5)*{\stt{2}};(12.5,-2.5)*{\stt{3}};
(17.5,-2.5)*{\stt{4}};
(7.5,-7.5)*{\stt{5}};(12.5,-7.5)*{\stt{6}};(12.5,-12.5)*{\stt{7}};
\endxy}~,\,
Q_{\bs{SW}}(5172364)=\raisebox{1.6em}{\xy 0;/r.16pc/:
(0,0)*{};(20,0)*{}**\dir{-};
(0,-5)*{};(20,-5)*{}**\dir{-};
(5,-10)*{};(15,-10)*{}**\dir{-};
(10,-15)*{};(15,-15)*{}**\dir{-};
(0,0)*{};(0,-5)*{}**\dir{-};
(5,0)*{};(5,-10)*{}**\dir{-};
(10,0)*{};(10,-15)*{}**\dir{-};
(15,0)*{};(15,-15)*{}**\dir{-};
(20,0)*{};(20,-5)*{}**\dir{-};
(2.5,-2.5)*{\stt{1}};(7.5,-2.5)*{\stt{2'}};(12.5,-2.5)*{\stt{3}};
(17.5,-2.5)*{\stt{6}};
(7.5,-7.5)*{\stt{4}};(12.5,-7.5)*{\stt{5}};(12.5,-12.5)*{\stt{7}};
\endxy}~,\]
thus $w:=2457163=(5172364)^{-1}\in\fs(T)$. Now fix $i=2$, then $u:=w|_{[1,2]}=21,\,v:=\mb{st}(w|_{[3,7]})=23541$,
and $P(v)=\raisebox{1.6em}{\xy 0;/r.16pc/:
(0,0)*{};(15,0)*{}**\dir{-};
(0,-5)*{};(15,-5)*{}**\dir{-};
(0,-10)*{};(5,-10)*{}**\dir{-};
(0,-15)*{};(5,-15)*{}**\dir{-};
(0,0)*{};(0,-15)*{}**\dir{-};
(5,0)*{};(5,-15)*{}**\dir{-};
(10,0)*{};(10,-5)*{}**\dir{-};
(15,0)*{};(15,-5)*{}**\dir{-};
(2.5,-2.5)*{\stt{1}};(7.5,-2.5)*{\stt{3}};(12.5,-2.5)*{\stt{4}};
(2.5,-7.5)*{\stt{2}};(2.5,-12.5)*{\stt{5}};
\endxy}~$.

On the other hand, $T=(T')_S$, where
\[T'=\raisebox{.8em}{\xy 0;/r.16pc/:
(0,0)*{};(10,0)*{}**\dir{-};
(0,-5)*{};(10,-5)*{}**\dir{-};
(0,0)*{};(0,-5)*{}**\dir{-};
(5,0)*{};(5,-5)*{}**\dir{-};
(10,0)*{};(10,-5)*{}**\dir{-};
(2.5,-2.5)*{\stt{1}};(7.5,-2.5)*{\stt{2}};
\endxy}\,,\,S=\raisebox{1.6em}{\xy 0;/r.16pc/:
(5,0)*{};(15,0)*{}**\dir{-};
(0,-5)*{};(15,-5)*{}**\dir{-};
(0,-10)*{};(10,-10)*{}**\dir{-};
(5,-15)*{};(10,-15)*{}**\dir{-};
(0,-5)*{};(0,-10)*{}**\dir{-};
(5,0)*{};(5,-15)*{}**\dir{-};
(10,0)*{};(10,-15)*{}**\dir{-};
(15,0)*{};(15,-5)*{}**\dir{-};
(7.5,-2.5)*{\stt{1}};(12.5,-2.5)*{\stt{4}};(2.5,-7.5)*{\stt{2}};
(7.5,-7.5)*{\stt{3}};(7.5,-12.5)*{\stt{5}};
\endxy}\,,
\,T'':=\mb{rect}(S)=\raisebox{1.2em}{\xy 0;/r.16pc/:
(0,0)*{};(20,0)*{}**\dir{-};
(0,-5)*{};(20,-5)*{}**\dir{-};
(5,-10)*{};(10,-10)*{}**\dir{-};
(0,0)*{};(0,-5)*{}**\dir{-};
(5,0)*{};(5,-10)*{}**\dir{-};
(10,0)*{};(10,-10)*{}**\dir{-};
(15,0)*{};(15,-5)*{}**\dir{-};
(20,0)*{};(20,-5)*{}**\dir{-};
(2.5,-2.5)*{\stt{1}};(7.5,-2.5)*{\stt{2}};
(12.5,-2.5)*{\stt{3}};(17.5,-2.5)*{\stt{4}};(7.5,-7.5)*{\stt{5}};
\endxy}~.\]
Then $\fs(T'')=\{12354,21354,31254,32154,41253,42153,43152,43251\}$. Hence, $P(v)$ $\notin\{P(w')\,:\,w'\in\fs(T'')\}$, which means that
\[\De\circ j(\lan T\ran)\neq(j\ot j)\circ\De(\lan T\ran).\]
\end{exam}

(ii) In \cite{Li} one of us %the author
has defined the $q$-analogue of PR algebras and applied that to study the odd Schur functions introduced by Ellis, Khovanov and Lauda.
We expect there should also exist an odd counterpart for Schur's P-functions. Indeed, if one uses the following odd shifted Knuth transformations,

(OSK1)\quad$xzy\sim -zxy$ if $x<y<z$,

(OSK2)\quad$yxz\sim -yzx$ if $x<y<z$,

(OSK3)\quad$xy\sim -yx$ if $x,y$ are the first two letters of the permutation,

\noindent
 one can then define the odd part of the shifted PR algebras. However one still needs to find an odd analog of $\Om$ inside the algebra of odd symmetric functions in search of the odd Schur P-functions.

% and compute the image of the natural basis of the odd shifted PR algebra under the projection $\pi'$.
\bigskip

\centerline{\bf Acknowledgments} NJ
acknowledges the partial support of Simons Foundation grant 198129
and NSFC grant 11271138.
%and hospitality of Max-Planck Institute for Mathematics
%in the Sciences at Leipzig during this work.

\bigskip
\bibliographystyle{amsalpha}

\end{document}